\pgfplotsset{compat=1.17}
\numberwithin{equation}{section}
\newtheorem{thm}{Theorem}[section]
\newtheorem{lemma}[thm]{Lemma}
\newtheorem{prop}[thm]{Proposition}
\theoremstyle{definition}
\theoremstyle{remark}
\newtheorem{rmk}[thm]{Remark}
\newcommand\N{{\mathbb{N}}}
\newcommand\R{{\mathbb{R}}}
\newcommand\Z{{\mathbb{Z}}}
\newcommand\cA{{\mathcal{A}}}
\newcommand\cB{{\mathcal{B}}}
\newcommand\cF{{\mathcal{F}}}
\newcommand\cG{{\mathcal{G}}}
\newcommand\cW{{\mathcal{W}}}
\newcommand\brF{{\breve{F}}}
\newcommand\brY{{\breve{Y}}}
\newcommand\brmu{{\breve{\mu}}}
\newcommand\brpi{{\breve{\pi}}}
\newcommand\bff{{\bar{f}}}
\newcommand\bm{{\bar{m}}}
\newcommand\bY{{\bar{Y}}}
\newcommand\bF{{\bar{F}}}
\newcommand\bT{{\bar{T}}}
\newcommand\bM{{\bar{M}}}
\newcommand\bPP{{\bar{\PP}}}
\newcommand\bDelta{{\bar{\Delta}}}
\newcommand\balpha{{\bar{\alpha}}}
\newcommand\bmu{{\bar{\mu}}}
\newcommand\bpi{{\bar{\pi}}}
\newcommand\btau{{\bar{\tau}}}
\newcommand\tg{{\tilde{g}}}
\newcommand\ts{{\tilde{s}}}
\newcommand\tX{{\widetilde{X}}}
\newcommand\hg{{\hat{g}}}
\newcommand\hT{{\widehat{T}}}
\newcommand\eps{{\varepsilon}}
\DeclareMathOperator{\diam}{diam}
\DeclareMathOperator{\E}{\mathbb{E}}
\DeclareMathOperator{\Lip}{Lip}
\DeclareMathOperator{\PP}{\mathbb{P}}
\title{Rates in almost sure invariance principle
for nonuniformly hyperbolic maps}
\author{
C.~Cuny{\footnote{Universit\'e de Brest, LMBA, UMR CNRS 6205. Email: christophe.cuny@univ-brest.fr}},
J.~Dedecker{\footnote{\hbadness=5600 Universit\'e Paris Cit\'e,  Laboratoire MAP5 (UMR 8145). Email: jerome.dedecker@u-paris.fr}},
A.~Korepanov{\footnote{Loughborough, UK. Email: a.korepanov@lboro.ac.uk}},
F.~Merlev\`ede{\footnote{ LAMA,  Univ Gustave Eiffel, Univ Paris Est Cr\'eteil, UMR 8050 CNRS. Email: florence.merlevede@univ-eiffel.fr}}
}
\date{28 June 2024}
\begin{document}

\maketitle

\begin{abstract}
    We prove the Almost Sure Invariance Principle (ASIP) with close to optimal error
    rates for nonuniformly hyperbolic maps. We do not assume exponential contraction
    along stable leaves, therefore our result covers in particular slowly mixing invertible
    dynamical systems as Bunimovich flowers, billiards with flat points
    as in Chernov and Zhang~(2005) and Wojtkowski'~(1990) system of two falling balls.

    For these examples, the ASIP is a new result, not covered by prior works for various
    reasons, notably because in absence of exponential contraction along stable leaves,
    it is challenging to employ the so-called Sinai's trick (Sinai 1972, Bowen 1975)
    of reducing a nonuniformly hyperbolic system to a nonuniformly expanding one.

    Our strategy follows our previous papers on the ASIP for nonuniformly expanding maps,
    where we build a semiconjugacy to a specific renewal Markov shift and adapt the
    argument of Berkes, Liu and Wu~(2014). The main difference is that now the Markov shift is
    \emph{two-sided}, the observables depend on the full trajectory, both the future and the past.
\end{abstract}

\section{Introduction}

Let $T \colon M \to M$ be a nonuniformly hyperbolic dynamical system in the sense of Young~\cite{Y98}.
Notable examples of such systems are Axiom A or H\'enon-type attractors and various chaotic billiards.
Suppose that $T$ preserves an ergodic physical (Sinai-Ruelle-Bowen) probability measure $m$.
Let $\varphi \colon M \to \R$ be a H\"older observable with $\int \varphi \, dm = 0$ and
let $S_n = \sum_{j =0}^{n-1} \varphi \circ T^j$ be its corresponding Birkhoff sums.

It is common to expect that $S_n$, considered as a random process on the probability space $(M, m)$,
behaves like a Brownian motion. A standard result, proved for a large class of nonuniformly hyperbolic
maps under nonrestrictive assumptions, is the \emph{Almost Sure Invariance Principle} (ASIP), see
Melbourne and Nicol~\cite{MN09} or Gou\"ezel~\cite{G10}.
It holds if, without changing the distribution, the process $(S_n)_{n \geq 0}$ can be defined on a  probability space
supporting a Brownian motion $(W_t)_{t \geq 0}$ with variance $c^2$, such that with some $\delta \in (0, 1/2)$,
\begin{equation}
    \label{eq:ASIP}
    S_n = W_n + o(n^\delta)
    \quad \text{almost surely.}
\end{equation}
In this case we say that the process $S_n$ satisfies the ASIP with rate $o(n^\delta)$ and variance $c^2$. 

The ASIP has a range of useful implications, such as the (functional) central limit theorem
or the (functional) law of iterated logarithm, see Berkes and Philipp~\cite{BP79}.
The error rates received a significant amount of attention, see e.g.\ our paper~\cite{CDKM18} for an overview.
See also Zaitsev~\cite{Z13} for a historical overview for sums of independent random vectors.

There are natural nonuniformly hyperbolic dynamical systems where the ASIP is expected but is not covered
by  existing proofs for one reason or another. Prime examples are
Bunimovich flowers~\cite{B73} and Wojtkowski' system of two falling balls~\cite{W90}.
\begin{equation}
    \label{eq:MAPS}
    \begin{tikzpicture}[baseline=(current  bounding  box.center)]
        \tkzDefPoint(0,-1){A}
        \tkzDefPoint(-1,0){B}
        \tkzDefPoint(0,1){C}
      \tkzDefPoint(1,0){D}
        \tkzDefPoint(-1,-1){AB}
        \tkzDefPoint(-1,1){BC}
        \tkzDefPoint(1,1){CD}
        \tkzDefPoint(1,-1){DA}
        \tkzCircumCenter(A,B,AB)\tkzGetPoint{ABc}
        \tkzCircumCenter(B,C,BC)\tkzGetPoint{BCc}
        \tkzCircumCenter(C,D,CD)\tkzGetPoint{CDc}
        \tkzCircumCenter(D,A,DA)\tkzGetPoint{DAc}
        \tkzDrawArc[thick, lightgray](ABc,B)(A)
        \tkzDrawArc[thick, lightgray](BCc,C)(B)
        \tkzDrawArc[thick, lightgray](CDc,D)(C)
        \tkzDrawArc[thick, lightgray](DAc,A)(D)
        \filldraw [black] (-1,-1) circle (0.05);
        \draw [-stealth]  (-1,-1) -- (-0.3,-0.6);
        \node at (0,-1.75) {Flower billiard};
        \fill[pattern=north east lines] (6,-1.1) rectangle (8,-1);
        \draw [-]  (6,-1) -- (8,-1);
        \draw [dashed, lightgray]  (7,-1) -- (7,1.5);
        \filldraw [black] (7,-0.5) circle (0.25);
        \node at (6.4,-0.5) {$M$};
        \filldraw [black] (7,1.0) circle (0.25);
        \node at (6.4,1.0) {$m$};
        \draw [-stealth]  (7.75,1.0) -- (7.75,0.25) node[midway,right] {$g$};
        \node at (7,-1.75) {Two falling balls};
    \end{tikzpicture}
\end{equation}
The nonuniformly hyperbolic structure for these maps, under certain natural assumptions,
is established by B\'alint, Borb\'ely and Varga~\cite{BBNV12} and Chernov and Zhang~\cite{CZ05a}.
Although the ASIP has not been proven, other statistical properties such as
the functional central limit theorem or (iterated) moment bounds are known,
see Fleming-V\'azquez \cite{F22} and Melbourne and Varandas~\cite{MV16}.

Another prototypical example is the intermittent baker's map~\cite[Example~4.1]{MV16}.
It is defined as a transformation of the unit square $M = [0,1] \times [0,1]$ by
\begin{equation}
    \label{eq:LSV2}
    T (x,y) =
    \begin{cases}
        (g(x), g^{-1}(y))       &   x \in [0, 1/2] , \\
        (2x - 1, (y+1) / 2  )     &   x \in (1/2, 1] ,
    \end{cases}
\end{equation}
where $g(x) = x ( 1 + 2^\alpha x^\alpha)$ and $\alpha \in (0,1)$.

In this paper we prove the ASIP for a class of nonuniformly hyperbolic maps,
including~\eqref{eq:MAPS} and~\eqref{eq:LSV2}, with rates close to optimum (see Theorem \ref{thm:ASIPNUH}).
Applied to the above examples, our main result is:

\begin{thm}
    \label{thm:examples}
    Suppose that $T \colon M \to M$ is either~\eqref{eq:LSV2} or 
    one of the maps~\eqref{eq:MAPS} under the assumptions of~\cite{BBNV12,CZ05a}.
    Let $m$ be the unique physical invariant probability measure.
    Let $v \colon M \to \R$ be H\"older with $\int v \, dm  = 0$ and $S_n = \sum_{j < n} v \circ T^j$.
    Then, for each $\eps > 0$, 
    \begin{enumerate}[label=(\alph*)]
        \item $S_n$ satisfies the ASIP with rate $o(n^{1/3} (\log n)^{4/3 + \eps})$ for both maps~\eqref{eq:MAPS}.
            (Possibly, the logarithmic factor can be reduced, see Theorem~\ref{thm:ASIPNUH} and Remark~\ref{rmk:BZ23}.)
        \item $S_n$ satisfies the ASIP with rate $o(n^\alpha (\log n)^{\alpha + \eps})$
            for the map~\eqref{eq:LSV2} when $\alpha < 1/2$.
    \end{enumerate}
\end{thm}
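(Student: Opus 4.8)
The plan is to deduce Theorem~\ref{thm:examples} directly from the general Theorem~\ref{thm:ASIPNUH}. That theorem applies to any $T\colon M\to M$ carrying a Young-type hyperbolic product structure --- a reference set foliated by a measurable product of unstable and stable leaves, a hyperbolic first-return map with bounded distortion and absolutely continuous stable holonomies --- where, crucially, \emph{no exponential contraction along stable leaves is assumed}, only a (possibly merely polynomial) modulus of contraction; its hypotheses are completed by a tail bound $m(R>n)=O(n^{-p})$ on the first-return time $R$, and its conclusion is the ASIP for H\"older observables with a rate $o(n^{\beta}(\log n)^{\theta})$ whose exponents $\beta,\theta$ are explicit functions of $p$ and of the stable-contraction modulus. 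So for each map in the statement two things must be supplied: (i) the hyperbolic product structure in the exact form Theorem~\ref{thm:ASIPNUH} demands, and (ii) the tail exponent (and, where it enters the formula, the stable-contraction modulus); parts (a) and (b) then fall out of the general theorem after tracking the logarithmic factors. Since a H\"older function $v$ on $M$ pulls back to a function on the tower that is H\"older along both the stable and the unstable directions, the hypothesis of Theorem~\ref{thm:ASIPNUH} on the observable is automatic in all three cases.

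For the intermittent baker's map~\eqref{eq:LSV2} the structure is transparent (see also~\cite{MV16}): take the reference set to be a full-measure subset of $(1/2,1]\times[0,1]$, with unstable leaves the horizontal segments $[0,1]\times\{y\}$ and stable leaves the vertical segments $\{x\}\times[0,1]$. The first-return map is hyperbolic, and because the $y$-coordinate is slaved to the inverse $x$-dynamics, the return time $R$ is exactly the first-return time to $(1/2,1]$ of the Pomeau--Manneville map $g(x)=x(1+2^{\alpha}x^{\alpha})$ (extended by $x\mapsto 2x-1$ on $(1/2,1]$); standard estimates give $m(R>n)=O(n^{-1/\alpha})$. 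Note that on the non-linear branch one has $y\mapsto g^{-1}(y)$, whose derivative tends to $1$ as $y\to 0$, so the contraction along stable leaves near the neutral fixed point is only polynomial --- precisely the regime that rules out Sinai's trick and that Theorem~\ref{thm:ASIPNUH} is designed for. Since $\alpha<1/2$ is the same as $1/\alpha>2$, feeding $p=1/\alpha$ and this stable modulus into Theorem~\ref{thm:ASIPNUH} yields the ASIP with rate $o(n^{\alpha}(\log n)^{\alpha+\eps})$, which is part~(b).

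For the two maps in~\eqref{eq:MAPS} the input is quoted from the literature rather than built by hand. Under the assumptions of Chernov--Zhang~\cite{CZ05a} for the flower billiard (a billiard with flat points) and of B\'alint, Borb\'ely and Varga~\cite{BBNV12} for the system of two falling balls of Wojtkowski~\cite{W90}, each of these maps admits a Young tower; what must be extracted from those references --- and re-derived if it is not stated in the required form --- is (a) the product structure together with the distortion and holonomy-regularity properties in the shape Theorem~\ref{thm:ASIPNUH} uses, (b) the fact that the contraction along stable leaves is only polynomial (it is not exponential, because of the flat points, respectively the grazing geometry), and (c) the return-time tail, which in these cases is $m(R>n)=O(n^{-3})$ up to a logarithmic factor, possibly after converting a stated decay-of-correlations rate into a tower-tail rate. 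With $p=3$, Theorem~\ref{thm:ASIPNUH} then gives the ASIP with rate $o(n^{1/3}(\log n)^{4/3+\eps})$; the power $4/3$ combines the logarithmic factor in the tail with the logarithmic corrections intrinsic to the general theorem, and by Remark~\ref{rmk:BZ23} it should be reducible. The main obstacle in the whole argument is exactly step~(ii) for the billiards: these systems live on phase spaces with singularity sets and non-uniform geometry, so checking that the Young-tower data of~\cite{CZ05a,BBNV12} really meet, verbatim, the hypotheses of Theorem~\ref{thm:ASIPNUH}, and that the tail bounds there are in the right quantitative form, is where the care lies; for the baker's map~\eqref{eq:LSV2} this verification is routine.
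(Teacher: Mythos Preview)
Your approach matches the paper's: Theorem~\ref{thm:examples} is deduced directly from Theorem~\ref{thm:ASIPNUH} by importing the nonuniformly hyperbolic structure and the return-time tails from~\cite{CZ05a,BBNV12,MV16}. Two corrections, however. First, the rate in Theorem~\ref{thm:ASIPNUH} depends \emph{only} on the tail $\mu(\tau\ge n)$; there is no ``stable-contraction modulus'' entering the exponents. The framework of Section~\ref{sec:NUHd} absorbs non-exponential contraction entirely into inequalities~\eqref{eq:NUH:aa} and~\eqref{eq:NUH:ab}, after which it plays no further quantitative role --- so for the baker's map you feed in only $\beta=1/\alpha$, $\gamma=0$, and the rate $o(n^{\alpha}(\log n)^{\alpha+\eps})$ drops out of part~\ref{thm:ASIP:flower:NUH}. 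Second, for the maps~\eqref{eq:MAPS} the paper cites the precise tail $\mu(\tau\ge n)=O(n^{-3}(\log n)^{3})$ from~\cite{CZ05a}, so that $\beta=3$ and $\gamma=3$ in Theorem~\ref{thm:ASIPNUH}\ref{thm:ASIP:flower:NUH}, whence $(\gamma+1)/\beta=4/3$; your ``$O(n^{-3})$ up to a logarithmic factor'' does not pin down the $4/3$, and no conversion from a correlation-decay rate is needed since the tail bound is available directly.
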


Our proofs are based on:
\begin{itemize}
    \item A construction of an extension of $T$, which is similar to a Young tower in the sense that
        it is (topologically) Markov with a tower structure and that the semiconjugacy map is Lipschitz,
        but with the difference that our extension is also Markov from the measure-theoretical point of view
        (i.e.\ can be described by a matrix of transition probabilities).
    \item A representation of the process $( v \circ T^j)_{ j \geq 0}$, without changing its distribution (but on another probability space), as   
        \begin{equation}
            \label{eq:Sng}
        \big (  \psi( \ldots, g_{k-1}, g_k, g_{k+1}, \ldots) \big )_{k \in {\mathbb Z}}
            ,
        \end{equation}
        where $(g_n)$ is a particular Markov chain and $\psi$ is a sufficiently regular function of its trajectories.
        A key tool in the study of the Markov chain is the \emph{meeting time} which is, informally, the time when two
        independent copies of the Markov chain first meet. Moments of the meeting time are used to control
        approximations of $\psi( \ldots, g_{j-1}, g_j, g_{j+1}, \ldots)$ by functions of finitely many
        coordinates.
    \item An adapted argument of Berkes, Liu and Wu~\cite{BLW14} which allows to deal with functions of the
        whole trajectory of a Markov chain instead of a sequence of independent identically distributed
        random variables as in~\cite{BLW14}.
    \end{itemize}

This is a continuation and extension of our previous works on nonuniformly expanding maps~\cite{CDKM18,CDKM20SD}.

\begin{rmk}
    As in~\cite{CDKM18,CDKM20SD}, our proof of ASIP happens on the level of a Markov chain~\eqref{eq:Sng}.
    At the same time, in~\cite{CDKM18,CDKM20SD} we had to deal with functions $\psi$ which depend only on
    future trajectories, i.e.\ $\sum_{j=0}^{n-1} \psi(g_j, g_{j+1}, \ldots)$.
    Working with nonuniformly hyperbolic dynamical systems, we necessarily have to deal with functions of
    the whole past and future trajectories. This creates problems which go beyond simple modifications of
    our previous arguments both in the reduction to the Markov chain (Section~\ref{sec:MST})
    and in the probabilistic part (Section~\ref{sec:ASIPNUH}). This can be contrasted with the proof of the ASIP
    in~\cite{BLW14} which is written for a one-sided Bernoulli shift rather than for a two-sided one
    ``to simplify the notation''.
\end{rmk}

\begin{rmk}
    It is curious that the degenerate case of zero variance is not covered by our general arguments and
    requires a special and very different treatment (Section~\ref{Sectvariance is zero}).
\end{rmk}

\begin{rmk}
    There are prior ideas which may be useful in finding alternative proofs of the ASIP for dynamical systems
    such as~\eqref{eq:MAPS} and~\eqref{eq:LSV2}, including the generalization to vector-valued observables.
    Notably:
    \begin{itemize}
        \item Melbourne and Nicol~\cite{MN05,MN09} prove the ASIP for vector-valued H\"older observables on
            nonuniformly hyperbolic dynamical systems with uniform contraction along stable leaves.
            Their proofs work using the so-called Sinai's trick (after Sinai~\cite{S72} and Bowen~\cite{B75},
            see~\cite[Introduction]{MV16}), representing a H\"older observable
            $v \colon M \to \R$ as $v = u + \chi \circ T - \chi$, where $\chi$ is bounded and $u$ is H\"older
            and constant on stable leaves.
            Then one can work with Birkhoff sums of $u$ on a quotient system which is nonuniformly expanding,
            and can be in turn reduced to a Gibbs-Markov map by inducing.
            For the ASIP, Melbourne and Nicol adapt the results of Philipp and Stout~\cite{PS75}
            and Kuelbs and Philipp~\cite{KP80}.

            In the absence of uniform contraction along stable leaves, as is the case for the maps~\eqref{eq:MAPS}
            and~\eqref{eq:LSV2}, Sinai's trick works but $\chi$ and $u$ lose regularity and become challenging
            to control. Nevertheless, some control is possible.
            Melbourne and Varandas~\cite{MV16} prove that $\chi,u \in L^p$ with some $p > 1$, and
            furthermore $u = m + \psi \circ T - \psi$ where again $m,\psi \in L^p$ and
            Birkhoff sums of $m$ are a reverse martingale.
        \item Gou\"ezel~\cite{G10} proves the ASIP for vector-valued random variables under a condition of
            certain exponential memory loss for characteristic functions. This condition does not hold
            for the maps~\eqref{eq:MAPS} or~\eqref{eq:LSV2}, but for instance Gou\"ezel~\cite[Theorem~2.4]{G10}
            uses it to prove the ASIP for a class of $L^p$ observables on Gibbs-Markov maps,
            and as a corollary on nonuniformly expanding maps via inducing.
    \end{itemize}
    However, using the above techniques, the error rates would be $O(n^{-1/4} (\log n)^\alpha)$ for some $\alpha>0$
    at best, whereas our approach is the only currently available one with better rates.
\end{rmk}

The paper is organized as follows. In Section~\ref{sec:MST} we state the assumptions on the
nonuniformly hyperbolic maps and prove that they can be modelled using Markov chains of a particular type. 
We call such systems Markov shift towers. In Section~\ref{sec:ASIPNUH} we prove the ASIP for H\"older observables
on nonuniformly hyperbolic maps through that on Markov shift towers.
Theorem~\ref{thm:examples} is a corollary of the more general Theorem~\ref{thm:ASIPNUH}.

We use the notation $\N = \{0,1,\ldots\}$ and $\N_0 = \{1,2,\ldots\}$.

\section{Nonuniformly hyperbolic maps as Markov shift towers}
\label{sec:MST}

In this section we give a standard definition of nonuniformly hyperbolic maps,
introduce Markov shift towers and show that every nonuniformly hyperbolic map
$T \colon M \to M$ with  invariant measure $m$
has an extension $f \colon \Delta \to \Delta$ with  invariant measure $\PP$
which is a Markov shift tower, and the semiconjugacy $\pi \colon \Delta \to M$ is Lipschitz.
\[
    \begin{tikzcd}[row sep=2.5em]
        \Delta \arrow[r,"f"] \arrow[d,"\pi"] & \Delta \arrow[d,"\pi"] & &
        \PP \arrow[d,mapsto,"(\pi)_*"]
        \\
        M \arrow[r,"T"] & M & &
        m
    \end{tikzcd}
\]
We also quantify the \emph{return time tails} of $f$ depending on those of $T$.
This result is stated formally in Theorem~\ref{thm:NUH}, but first we need a few pages of notation.

\subsection{Notation and result}
\label{sec:NUH}

\subsubsection{Nonuniformly hyperbolic maps}
\label{sec:NUHd}

Let $(M,d_0)$ be a bounded metric space with a Borel probability measure $m$ such that $(M,m)$ is a Lebesgue space.
Let $T \colon M \to M$ be measurable for the Borel sigma-field.
We assume that $T$ preserves $m$, is ergodic and  is nonuniformly hyperbolic in the following sense: 

\begin{itemize}
    \item There is a measurable subset $Y \subset M$, $m(Y) >0$, with an at most countable partition $\alpha$ and
        a return time $\tau \colon Y \to \{1,2,\ldots\}$ that is constant on each $a \in \alpha$ with value $\tau(a)$
        and $T^{\tau(y)}(y) \in Y$ for all $y \in Y$.
        We denote by $F \colon y \mapsto T^{\tau(y)}(y)$ the induced map.
        There is an $F$-invariant probability measure $\mu$ on $Y$ such that $\int \tau \, d\mu < \infty$ and which agrees with $m$ in the usual for induced maps sense:   
                \[
            m = \Bigl( \int \tau \, d\mu \Bigr)^{-1} \sum_{a \in \alpha} \sum_{k=0}^{\tau(a) - 1} (T^k)_* \mu_a
            ,
        \]
        where $\mu_a$ is the restriction of $\mu$ on $a$.

    \item Coding of orbits under $F$ by elements of $\alpha$ is non-pathological in the sense that
        the set of
        $(\ldots, a_{-1}, a_0, a_1, \ldots) \in \alpha^\Z$,
        for which there exists $(\ldots, y_{-1}, y_0, y_1, \ldots) \in Y^\Z$ such that
        $y_n \in a_n$ and $F(y_n) = y_{n+1}$, is measurable in $\alpha^\Z$ with the
        product topology and Borel sigma-algebra ($\alpha$ is equipped with the discrete topology). 
    \item For $y,z \in Y$ we define the separation time $s(y,z)$ as the least $n \geq 0$ such
        that $F^n(y)$ and $F^n(z)$ belong to different elements of $\alpha$,
        or $\infty$ if such $n$ does not exist.
        There are constants $K\ge 1$ and $\gamma\in(0,1)$ such that:
        \begin{itemize}
            \item If $y,z \in Y$, then for all $j \geq 0$,
                \begin{equation}
                    \label{eq:NUH:aa}
                    d_0(F^j(y),F^j(z)) \leq K (\gamma^{s(y,z) - j} + \gamma^{j})
                    .
                \end{equation}
            \item If $y,z \in a$, $a \in \alpha$, then for all $0 \leq j \leq \tau(a)$,
                \begin{equation}
                    \label{eq:NUH:ab}
                    d_0(T^j y,T^j z) \leq K \bigl[d_0(y,z) + d_0(F(y), F(z)) \bigr]
                \end{equation}
        \end{itemize}
    \item
        There is a partition $\cW^s$ of $Y$ into ``stable leaves''.
        The stable leaves are invariant under $F$ meaning that
        $F(W^s_y) \subset W^s_{F(y)}$ for all $y \in Y$, 
        where $W^s_y$ is the stable leaf containing $y$.
        Each $a \in \alpha$ is a union of stable leaves,
        i.e.\ $\cW_s$ is a refinement of $\alpha$.

        Let $\bY = Y / \sim$, where $y \sim z$ if $y \in W^s_z$, and
        let $\bpi \colon Y \to \bY$ be the natural projection.
        Since the stable leaves are invariant under $F$ and $\cW^s$ is a
        refinement of $\alpha$, we obtain a well defined quotient map
        $\bF \colon \bY \to \bY$ with a partition $\balpha$ of $\bY$
        and separation time $s$.

        We suppose that the probability measure $\bmu = \bpi_* \mu$ on $\bY$
        is invariant under $\bF$. Moreover, for each $a \in \balpha$, the restriction 
        $\bF \colon a \to \bY$ is a bijection (modulo $\bmu$ zero measure),
        and its inverse Jacobian $ \displaystyle \zeta_a= \frac{d\bmu}{d\bmu \circ \bF}$ satisfies
        \[
            \bigl|\log \zeta_a(y)-\log \zeta_a(z)\bigr|
            \leq K\gamma^{s(y,z)}
            \quad \text{for all } y,z \in a
            .
        \]
        In other words, the quotient map $\bF$ is full branch Gibbs-Markov.
\end{itemize}

\subsubsection{Markov shift towers}
\label{sec:MSTd}

A closely related class of nonuniformly hyperbolic maps is what we call a \emph{Markov shift tower}.
The main difference is that the induced map, denoted by $f_X$ below, is a Bernoulli shift, both topologically
and measure-theoretically.

We say that $f \colon \Delta \to \Delta$ is a \emph{Markov shift tower} if
it has the following structure.
\begin{itemize}
    \item We are given a finite or countable probability space $(\cA, \PP_\cA)$,
        an integrable function $h_\cA \colon \cA \to \{1,2,\ldots\}$ and
        a constant $\xi \in (0, 1)$. These define the rest of the construction.
    \item
        Let \((X, \PP_X) = (\cA^\Z, \PP_\cA^\Z)\) be the product probability space and let
        \(f_X \colon X \to X\) be the left shift
        \[
            f_X(\ldots, a_{-1}, a_0, a_1, \ldots) = (\ldots, a_0, a_1, a_2, \ldots)
            .
        \]
        Define \(h \colon X \to \{1,2,\ldots\}\) by \(h(\ldots, a_{-1}, a_0, a_1, \ldots) = h_\cA(a_0)\).
    \item The map $f \colon \Delta \to \Delta$ is a suspension over $f_X \colon X \to X$ with
        a roof function $h$, i.e.\
        \begin{equation}
            \begin{aligned}
                \label{eq:susp}
                \Delta & = \{(x,\ell) \in X \times \Z : 0 \leq \ell < h(x)\} \\
                f(x,\ell) & = \begin{cases} (x, \ell + 1), & \ell < h(x) - 1 \\
                (f_X (x), 0), & \ell = h(x) - 1 \end{cases}
                .
            \end{aligned}
        \end{equation}
    \item Let \(\PP\) be the probability measure on $\Delta$ defined by 
        \[
            \PP(A \times \{\ell\}) = \biggl( \int h \, d \PP_X \biggr)^{-1} \PP_X(A)
            \quad \text{for all} \quad
            \ell \geq 0
            \quad \text{and} \quad
            A \subset \{x \in X : h(x) \geq \ell + 1\}
            .
        \]
        Since \(\PP_X\) is \(f_X\)-invariant, note that \(\PP\) is \(f\)-invariant.
    \item Define a distance \(d\) on \(X\) by \(d(x,y) = \xi^{s(x,y)}\),
        where \(s \colon X \times X \to \{0,1,\ldots\}\) is the separation time,
        \[
            s((\ldots, a_{-1}, a_0, a_1, \ldots), (\ldots, b_{-1}, b_0, b_1, \ldots) )
            = \inf \{ j \geq 0 : a_j \neq b_j \text{ or } a_{-j} \neq b_{-j} \}
            .
        \]
        Let \(d\) also denote the related distance on $\Delta$:
        \begin{equation}
            \label{eq:cdis}
            d((x,k),(y,j)) = \begin{cases}
                1, & k \neq j \\
                d(x,y), & k=j
            \end{cases}.
        \end{equation}

\end{itemize}

\begin{rmk}
    As a part of the definition of a Markov shift tower, $f \colon \Delta \to \Delta$ is Markov
    both topologically (as a suspension over a Bernoulli shift, it has a natural Markov partition)
    and probabilistically (the invariant measure $\PP$ is Markov, it can be described by transition
    probabilities). Also $\Delta$ is a metric space with metric $d$.
\end{rmk}

\begin{rmk}
    \label{rmk:NE}
    We used similar Markov shift towers in our previous works~\cite{CDKM18,CDKM20SD,K17r}.
    The main difference is that they were \emph{one-sided:} the base map was the left shift
    $f_X \colon X \to X$ on $X = \cA^\N$ instead of $X = \cA^\Z$ here.
    A two-sided Markov shift tower is the natural extension~\cite{R64,S23}
    of the corresponding one-side tower.
\end{rmk}

\begin{rmk}
    An important characteristic of both nonuniformly hyperbolic maps
    and Markov shift towers is tail of the return times,
    i.e.\ the asymptotics of $\mu(\tau \geq n)$ and $\PP_X(h \geq n)$
    respectively.
\end{rmk}

\subsubsection{Main result}

\begin{thm}
    \label{thm:NUH}
    Suppose that $T \colon M \to M$ is a nonuniformly hyperbolic map as in Section~\ref{sec:NUHd}. Then there exists
    a Markov shift tower $f \colon \Delta \to \Delta$ and a map
    $\pi \colon \Delta \to M$, defined $\PP$-almost surely on $\Delta$, such that:
    \begin{itemize}
        \item $\pi$ is a semiconjugacy, i.e.\ $\pi \circ f = T \circ \pi$,
        \item $\pi$ is measure preserving, i.e.\ $\pi_* \PP = m$,
        \item $\pi$ is Lipschitz.
    \end{itemize}
    Moreover, the return time tails of $f$ are closely related to those of $T$:
    \begin{itemize}
        \item
            if $\mu(\tau \geq n) = O(n^{-\beta} L(n) )$ with $\beta > 0$ and $L$ a slowly varying function at infinity,
            then $\PP_\cA (h_\cA \geq n) = O(n^{-\beta} L(n) )$;
        \item
            if $\int \tau^\beta \, d\mu < \infty$ with $\beta > 0$,
            then $\int h_\cA^\beta \, d\PP_\cA < \infty$;
        \item
            if $\int e^{\beta \tau^\delta} \, d\mu < \infty$
            with $\beta > 0$ and $\delta \in (0,1]$, then
            $\int e^{\beta' h_\cA^\delta} \, d\PP_\cA < \infty$
            for some $0 < \beta' \leq \beta$.
    \end{itemize}
\end{thm}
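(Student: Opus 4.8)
The plan is to construct the Markov shift tower in three stages: reduce to the nonuniformly expanding quotient by the stable foliation, model it by a \emph{one-sided} Markov shift tower using the machinery of~\cite{CDKM18,CDKM20SD}, and then pass to the two-sided natural extension and rebuild the stable directions from the past coordinates. Throughout we use that $(M,T,m)$ is the $\tau$-suspension of the induced map $(Y,F,\mu)$, and that $\cW^s$ makes $(\bY,\bF,\bmu)$ a factor of $(Y,F,\mu)$ with $\bF$ full-branch Gibbs--Markov.

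\emph{Stage 1: reduction to the expanding quotient.} Since $\tau$ is constant on stable leaves it descends to a function $\btau$ on $\bY$, and because $\bmu=\bpi_*\mu$ we have $\bmu(\btau\ge n)=\mu(\tau\ge n)$ for all $n$ and $\int\btau\,d\bmu=\int\tau\,d\mu<\infty$. Let $(\bM,\bT,\bm)$ be the $\btau$-suspension of $\bF$, i.e.\ the quotient of $(M,T,m)$ by the stable foliation. Equipped with the symbolic metric this is a nonuniformly expanding map of exactly the kind handled in~\cite{CDKM18,CDKM20SD}, whose crucial feature is the renewal/coupling argument that converts a Gibbs--Markov base into a genuine Bernoulli base. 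That construction produces a finite or countable probability space $(\cA,\PP_\cA)$, a roof $h_\cA\colon\cA\to\{1,2,\ldots\}$, a one-sided Markov shift tower $f^+\colon\Delta^+\to\Delta^+$ over $(\cA^\N,\PP_\cA^\N)$ with roof $h(x)=h_\cA(a_0)$ and invariant measure $\PP^+$, and a measure-preserving semiconjugacy $\pi^+\colon\Delta^+\to\bM$ which is Lipschitz, with $\PP_\cA(h_\cA\ge n)$ controlled by $\bmu(\btau\ge n)=\mu(\tau\ge n)$ precisely as in the three tail alternatives of Theorem~\ref{thm:NUH}. This settles the tail part of the statement.

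\emph{Stages 2 and 3: two-sided extension and lift.} Let $X=\cA^\Z$, $\PP_X=\PP_\cA^\Z$, let $f_X$ be the left shift, $h(x)=h_\cA(a_0)$, and let $f\colon\Delta\to\Delta$, $\PP$ and $d$ be the suspension, measure and metric of Section~\ref{sec:MSTd}; this is a Markov shift tower by definition. Since $(\cA^\Z,f_X)$ is the natural extension of the one-sided shift and the roof depends only on the $0$-th coordinate, $(\Delta,f,\PP)$ is the natural extension of $(\Delta^+,f^+,\PP^+)$ (cf.~\cite{R64,S23}), and the coordinate-forgetting projection $p\colon\Delta\to\Delta^+$ intertwines $f$ with $f^+$. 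Given $(x,\ell)\in\Delta$, the points $\pi^+(p(f^k(x,\ell)))$, $k\in\Z$, form a bi-infinite $\bT$-orbit in $\bM$; indexing its visits to the base $\bY\subset\bM$ gives a bi-infinite $\bF$-orbit $(\by_n)_{n\in\Z}$ in $\bY$, with $(\by_n)_{n\ge0}$ read off from the future coordinates of $x$ and $(\by_n)_{n<0}$ encoding the backward branches selected by the past coordinates. We then set $\pi(x,\ell)=T^{\ell'}(y)$, where $\ell'$ is the level of $\pi^+(p(x,\ell))$ in its column and $y$ is the unique point of $\bigcap_{k\ge0}F^k\bigl(\bpi^{-1}(\by_{-k})\bigr)$; this intersection is a single point because the sets are nested, $F(\bpi^{-1}(\by_{-k-1}))\subset\bpi^{-1}(\by_{-k})$, with diameters at most $K\gamma^k$ by~\eqref{eq:NUH:aa} applied to points of infinite separation time, and it is non-empty for $\PP$-almost every $x$ by the non-pathological coding hypothesis. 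By construction $\pi\circ f=T\circ\pi$, and $\pi_*\PP=m$ follows by disintegrating $m$ over $\bm$. Finally, $\pi$ is Lipschitz: when $a_j=a_j'$ for $|j|<N$, agreement of the future coordinates and Lipschitzness of $\pi^+$ bound the distance of the two points of $\bM$ (hence of the corresponding $\by_0$ and of finitely many forward iterates) by an exponentially small amount, agreement of the past coordinates makes the selected leaves $\bpi^{-1}(\by_{-k})$ coincide for $k<N$ so that~\eqref{eq:NUH:aa} again contributes $O(\gamma^N)$, and~\eqref{eq:NUH:ab} transfers these bounds from $F$ to $T$ along each column; points on distinct tower levels are at distance $1$ and need no argument. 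Adjusting the constant $\xi$ to be comparable with $\gamma$ and with the contraction rate supplied by $\pi^+$ yields the Lipschitz bound.

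\emph{Main obstacle.} The substantive step is Stage 1: the invariant measure of $\bF$, seen through its itinerary coding, is a Gibbs measure rather than a product, so only a renewal-type construction — which necessarily adds auxiliary randomness and hence builds an \emph{extension} rather than a model — can make the base genuinely Bernoulli while keeping the roof tails as sharp as those of $\tau$. The other delicate point is the interaction in Stage 3 between this symbolic construction and the metric geometry of the stable foliation, which is available only through~\eqref{eq:NUH:aa}--\eqref{eq:NUH:ab}: in the absence of uniform contraction along stable leaves one must verify with care that the nested intersections defining $\pi$ are non-empty and shrink $\PP$-almost surely, and that nearby past coordinates genuinely select nearby stable leaves.
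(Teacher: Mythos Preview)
Your overall architecture---quotient to an expanding system, build a one-sided Markov shift tower (this is~\cite{K17r}, with the tail control coming from~\cite{K17r,K18e}, rather than~\cite{CDKM18,CDKM20SD}), pass to the two-sided natural extension, then lift back to $M$---is the paper's. The execution differs: the paper does not work with the geometric quotient $\bM=M/\cW^s$. Instead it first replaces $(M,T)$ by a purely symbolic suspension $(M_\alpha,T_\alpha)$ over the two-sided itinerary shift $\sigma\colon\alpha^\Z\to\alpha^\Z$, via a Lipschitz coding map $\chi_\alpha\colon\alpha^\Z\to Y$ built from the natural extension $\brF\colon\brY\to\brY$ of $F$ (this is where the non-pathological coding hypothesis is actually used, to make $\iota^{-1}$ measurable). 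Projecting to $\alpha^\N$ gives the expanding factor $\bT_\alpha$, and the key point is that $T_\alpha$ is now \emph{literally} the natural extension of $\bT_\alpha$. The semiconjugacy $\pi_\alpha\colon\Delta\to M_\alpha$ is then given by a one-line concatenation formula, and $(\pi_\alpha)_*\PP=m_\alpha$ is a one-line consequence of uniqueness of the invariant lift in a natural extension.

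Your more direct geometric route has two genuine gaps at precisely the places the paper's symbolic intermediate $M_\alpha$ was designed to handle. First, ``$\pi_*\PP=m$ follows by disintegrating $m$ over $\bm$'' is not a proof: $(M,T,m)$ is in general not the natural extension of $(\bM,\bT,\bm)$ (indeed $T$ need not be invertible), so there is no uniqueness principle to invoke, and you have neither identified the conditional measures of $m$ on stable leaves nor shown that your $\pi$ reproduces them. Second, the non-pathological coding hypothesis is purely a \emph{measurability} statement about the set of admissible bi-infinite $\alpha$-itineraries; it says nothing about the nested intersection $\bigcap_{k\ge0}F^k(\bpi^{-1}(\by_{-k}))$ being nonempty $\PP$-almost surely. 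In the paper both issues evaporate because the measure $\mu_\alpha$ on $\alpha^\Z$ is \emph{defined} as the pushforward of the natural-extension measure $\brmu$, hence is automatically supported on itineraries realised by genuine bi-infinite $F$-orbits, and the measure-preserving square commutes by uniqueness. Inserting the symbolic intermediate $M_\alpha$ (or, equivalently, routing your construction through $\brY$ rather than through stable-leaf preimages) would close both gaps.
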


\subsection{Proof of Theorem~\ref{thm:NUH}}
    
Our strategy is to reduce the proof to an application of ~\cite[Theorem~3.4]{K17r}. There, a one-sided Markov shift tower is constructed
for a nonuniformly expanding map. Then $T$ can be considered via the natural extension of such a map,
and the corresponding natural extension of the one-sided Markov shift in~\cite{K17r} is a two-sided
Markov shift, as required for Theorem~\ref{thm:NUH}.

Let $\sigma \colon \alpha^\Z \to \alpha^\Z$ be the left shift.
We supply $\alpha^\Z$ with the product topology and the Borel sigma-algebra.
For $x=(x_k)_{k \in\Z}$ and $y=(y_k)_{k \in\Z}$ in $\alpha^\Z$,
define the separation time and distance by
\begin{align*}
    s(x,y) & = \inf \{k \geq 0 : x_k \neq y_k \text{ or } x_{-k} \neq y_{-k} \}
    , \\
    d(x,y) & = \gamma^{s(x,y)}
    .
\end{align*}
Here $\gamma$ is the constant in~\eqref{eq:NUH:aa}.


\subsubsection{Tower over full shift}

Our first step is to model $T \colon M \to M$ as a suspension tower over $\sigma \colon \alpha^\Z \to \alpha^\Z$.
It is natural that this can be done, yet this requires some care.

\begin{prop}
    \label{prop:Fshift}
    There exists a probability measure $\mu_\alpha$ on $\alpha^\Z$ and a $\mu_\alpha$-almost everywhere defined
    map $\chi_\alpha \colon \alpha^\Z \to Y$ such that on the domain of $\chi_\alpha$:
    \begin{itemize}
        \item $\chi_\alpha$ is Lipschitz: $d_0(\chi_\alpha(x), \chi_\alpha(y)) \leq 2 K d(x,y)$,
        \item $(\chi_\alpha)_* \mu_\alpha = \mu$,
        \item $F \circ \chi_\alpha = \chi_\alpha \circ \sigma$,
        \item $\chi_\alpha(\ldots, a_{-1}, a_0, a_1, \ldots) \in a_0$.
    \end{itemize}
\end{prop}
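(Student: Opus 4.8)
The plan is to code $F$ by the full two-sided shift $\sigma$ through itineraries, in the standard way. The one genuine subtlety is that $F$ need not be invertible, so $\mu$ cannot be transported to $\alpha^{\Z}$ along an ``itinerary map'', and $\mu_\alpha$ must instead be manufactured from the natural extension of $(Y,F,\mu)$.

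\emph{Construction of $\chi_\alpha$ via nested cylinders.}
For an admissible $x=(a_k)_{k\in\Z}$ — admissible sequences form a measurable set by hypothesis — and $n\ge0$ set
\[
    C_n(x)=\bigl\{\,F^n(z)\ :\ z\in Y,\ F^j(z)\in a_{j-n}\ \text{for}\ 0\le j\le 2n\,\bigr\}\subset a_0 ,
\]
which is nonempty by admissibility. Two realisers $z,z'$ of the word $(a_{-n},\dots,a_n)$ lie in the same cell of $\alpha$ for the first $2n+1$ steps, i.e.\ $s(z,z')\ge 2n+1$, so \eqref{eq:NUH:aa} at the midpoint $j=n$ gives $d_0(F^nz,F^nz')\le K(\gamma^{n+1}+\gamma^n)$; hence $\diam C_n(x)\le 2K\gamma^n$, and applying $F$ to a realiser shows $C_{n+1}(x)\subset C_n(x)$. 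The closures $\cl(C_n(x))$ are thus nested with diameters tending to $0$, and I would define $\chi_\alpha(x)$ to be the unique point of $\bigcap_n\cl(C_n(x))$, equivalently $\chi_\alpha(x)=\lim_n F^n(z_n)$ for any realisers $z_n$ of $(a_{-n},\dots,a_n)$. The Lipschitz bound then drops out: if $s(x,x')=p\ge1$ the sequences $x,x'$ share the radius-$(p-1)$ central word, so $C_{p-1}(x)=C_{p-1}(x')$ and $d_0(\chi_\alpha(x),\chi_\alpha(x'))\le\diam C_{p-1}(x)\le 2K\gamma^{p-1}$, an estimate of the claimed form (the precise constant being a matter of how one balances the two terms of \eqref{eq:NUH:aa}).

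\emph{The measure and the reduction.}
Since $F$ may be non-injective I pass to the natural extension: let $\hat Y=\{(y_k)_{k\in\Z}\in Y^{\Z}:F(y_k)=y_{k+1}\}$ carry the natural extension $\hat\mu$ of $(Y,F,\mu)$, let $\pi_0\colon\hat Y\to Y$ be $(y_k)_k\mapsto y_0$ (so $(\pi_0)_*\hat\mu=\mu$), and let $\iota\colon\hat Y\to\alpha^{\Z}$ send an orbit to its itinerary, $y_k\in a_k$. Put $\mu_\alpha:=\iota_*\hat\mu$, which is $\sigma$-invariant and carried by admissible sequences. The crucial identity is $\chi_\alpha\circ\iota=\pi_0$ $\hat\mu$-a.e., which holds because for a bi-infinite orbit one may take $z_n=y_{-n}$ as the realiser in the construction of $\chi_\alpha$, whence $F^n(z_n)=y_0$ for every $n$. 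All four assertions then follow from the corresponding trivialities about $\hat Y$: $(\chi_\alpha)_*\mu_\alpha=(\chi_\alpha)_*\iota_*\hat\mu=(\pi_0)_*\hat\mu=\mu$; the shift on $\hat Y$ intertwines $\pi_0$ with $F$ and $\iota$ with $\sigma$, giving $F\circ\chi_\alpha=\chi_\alpha\circ\sigma$ on the ($\mu_\alpha$-full) domain of $\chi_\alpha$; and $\chi_\alpha(\iota((y_k)_k))=y_0\in a_0$ gives the cylinder property there as well.

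\emph{Main obstacle.}
The analytic work is minimal — \eqref{eq:NUH:aa} by itself controls the cylinders and the Lipschitz constant. The real difficulty is exactly the point above: because $F$ is generically non-injective there is no itinerary map $Y\to\alpha^{\Z}$ transporting $\mu$, so one must realise $(Y,F,\mu)$ inside its natural extension and check $\chi_\alpha\circ\iota=\pi_0$; this is what forces the two-sided shift and makes the measure bookkeeping, rather than the convergence of the coding, the heart of the argument. A secondary point is the measurability of the admissible set, of $\iota$, and of $\chi_\alpha$, which is precisely what the ``non-pathological coding'' hypothesis of Section~\ref{sec:NUHd}, together with $(Y,m)$ being a Lebesgue space, is there to supply. (Alternatively, to get $\chi_\alpha(x)\in a_0$ for \emph{every} admissible $x$ one argues that the forward itinerary $(a_0,a_1,\dots)$ codes a point of the full-branch Gibbs--Markov quotient $\bF$, hence a stable leaf contained in $a_0$, and that $\chi_\alpha(x)$ lies on that leaf.)
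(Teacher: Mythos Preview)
Your approach is essentially the same as the paper's: both pass to the natural extension $\hat Y$ (the paper writes $\brY$), push $\hat\mu$ forward along the itinerary map $\iota$ to define $\mu_\alpha$, and take $\chi_\alpha=\pi_0\circ\iota^{-1}$, with the Lipschitz bound coming directly from~\eqref{eq:NUH:aa} applied at the midpoint of a length-$(2n+1)$ word. The only cosmetic difference is the order of exposition --- you first build $\chi_\alpha$ intrinsically via the nested cylinders $C_n(x)$ and then verify $\chi_\alpha\circ\iota=\pi_0$, whereas the paper defines $\chi_\alpha=\brpi_0\circ\iota^{-1}$ outright and checks the diameter of $(\brpi_0\circ\iota^{-1})([a_{-n},\dots,a_n])$; your identity $\chi_\alpha\circ\iota=\pi_0$ (via the choice $z_n=y_{-n}$) is exactly the statement that these two definitions coincide.
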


\begin{proof}
    In order to construct the natural extension~\cite{R64,S23} of $F \colon Y \to Y$,
    we assume that $F(Y) = Y$. This comes without loss of generality: otherwise
    instead of $Y$  we work with $\cap_{n \geq 0} F^{-n}(Y)$. This is a full measure
    subset of $Y$ on which $F$ is surjective, and the proof goes through with straightforward
    and minor changes.

    Let $\brF \colon \brY \to \brY$ be the natural extension~\cite{R64,S23} of $F \colon Y \to Y$:
    \[
        \brY = \bigl\{
            (\ldots, y_{-1}, y_0, y_1, \ldots) \in Y^\Z : y_{n + 1} = F(y_n)
        \bigr\}
        ,
    \]
    and $\brF$ is the left shift. The topology on $\brY$ is generated by open
    cylinders of the type $\{y_i \in E\}$ with open $E \subset Y$ (in the induced topology on $Y$
    as  a subspace of $M$), and we consider $\brY$ with the Borel sigma-algebra.
    Let $\brpi_\ell \colon \brY \to Y$, $(\ldots, y_{-1}, y_0, y_1, \ldots) \mapsto y_\ell$
    be the natural projections, set $\brpi = \brpi_0$, and
    let $\brmu$ be the unique $\brF$-invariant probability measure on $\brY$ such that
    $\brpi_* \brmu = \mu$.

    Define $\iota \colon \brY \to \alpha^\Z$ by
    $(\ldots, y_{-1}, y_0, y_1, \ldots) \mapsto (\ldots, a_{-1}, a_0, a_1, \ldots)$
    where $y_n \in a_n$. Then $\iota$ is a measurable and injective map.
    Set $\mu_\alpha = \iota_* \brmu$.
    Using~\eqref{eq:NUH:aa}, for $0 \leq |\ell| \leq n$,
    \begin{align*}
            \diam \bigl( (\brpi_\ell \circ \iota^{-1})([a_{-n}, \ldots, a_n]) \bigr)
            & = \diam  \bigl( F^{n + \ell} ( \{ y \in Y : F^k(y) \in a_{-n + k} \text{ for all } 0 \leq k \leq 2n  \} ) \bigr)
            \\
            & \leq 2 K \gamma^{n - |\ell|}
            = 2 K \gamma^{-|\ell|} \diam ([a_{-n}, \ldots, a_n])
            ,
    \end{align*}
    where $[a_{-n}, \ldots, a_n] \subset \alpha^\Z$ is a cylinder.
    Hence $\brpi_\ell \circ \iota^{-1}$ is Lipschitz with Lipschitz constant $2 K \gamma^{-|\ell|}$.
    Since $\iota^{-1}(x) = ( (\brpi_\ell \circ \iota^{-1}) (x) )_{\ell \in \Z}$
    and each $\brpi_\ell \circ \iota^{-1}$ is continuous, $\iota^{-1} \colon \iota(\brY) \to \brY$ is continuous.
    Recall that, as a part of the definition of $T$ as a nonuniformly hyperbolic map we assumed that
    $\iota(\brY)$ is measurable in $\alpha^\Z$. Then $\iota^{-1}$ is measurable.

    Observe that $\iota^{-1}$ is a conjugacy between $\brF$ and $\sigma$. Set $\chi_\alpha = \brpi \circ \iota^{-1}$.
    \[
        \begin{tikzcd}[row sep=2.5em]
            \alpha^\Z \arrow[r,"\sigma"] \arrow[d,"\iota^{-1}"] & \alpha^\Z \arrow[d,"\iota^{-1}"] & &
            \mu_\alpha \arrow[d,mapsto,"(\iota^{-1})_*"]
            \\
            \brY \arrow[r,"\brF"] \arrow[d,"\brpi"] & \brY \arrow[d,"\brpi"] & &
            \brmu \arrow[d,mapsto,"(\brpi)_*"]
            \\
            Y \arrow[r,"F"] & Y & &
            \mu
        \end{tikzcd}
    \]
    Then $\chi_\alpha$ is a measure preserving semiconjugacy between $\sigma$ and $F$
    with the required properties.
\end{proof}

In the setup of Proposition~\ref{prop:Fshift},
consider the suspension map over $\sigma$ with roof function $\tau_\alpha = \tau \circ \chi_\alpha$, i.e.\ the space
\[
    M_\alpha
    = \{(x, \ell) : x \in \alpha^\Z, \; 0 \leq \ell < \tau_\alpha(x) \}
\]
with the transformation
\[
    T_\alpha \colon (x, \ell) \mapsto
    \begin{cases}
        (x, \ell + 1), & \ell < \tau_\alpha(x) - 1, \\
        (\sigma(x), 0), & \text{else.}
    \end{cases}
\]
As in \eqref{eq:cdis}, we supply $M_\alpha$ with the metric
\[
    d_\alpha((x, k), (y, \ell)) =
    \begin{cases}
        1, & k \neq \ell , \\
        d(x,y), & k = \ell .
    \end{cases}
\]
Let $m_\alpha = \mu_\alpha \times \text{counting} / \text{normalization}$
be the probability measure on $M_\alpha$, and let
$\pi_\alpha \colon M_\alpha \to M$,
\[
    \pi_\alpha \colon (x, \ell) \mapsto T^\ell(\chi_\alpha(x))
    .
\]

\begin{rmk}
    \label{rmk:Talpha}
    Observe that:
    \begin{itemize}
        \item $T_\alpha \colon M_\alpha \to M_\alpha$
            is a nonuniformly hyperbolic map with the same tails of return times as $T$,
            namely $\mu_\alpha(\tau_\alpha \geq n) = \mu(\tau \geq n)$ for all $n$.
        \item $\pi_\alpha$ is defined $m_\alpha$-almost surely, but possibly not on the whole $M_\alpha$.
        \item $\pi_\alpha \colon M_\alpha \to M$ is a measure preserving semiconjugacy between $T_\alpha$ and $T$
            (modulo zero measure).
        \item $\pi_\alpha \colon M_\alpha \to M$ is Lipschitz (on its domain). Indeed,
            \[
                d_0(\pi_\alpha(x, \ell), \pi_\alpha(y, k)) \leq C d((x,\ell), (y, k))
            \]
            holds:
            \begin{itemize}
                \item with $C = \diam M$ when $\ell \neq k$ by construction,
                \item with $C = 2K$ when $\ell = k = 0$ by Proposition~\ref{prop:Fshift},
                \item with $C = 2 K^2 ( 1 + \gamma^{-1})$ when $\ell = k > 0$. Indeed, by \eqref{eq:NUH:ab}, 
                \[
                d_0(\pi_\alpha(x, \ell), \pi_\alpha(y, \ell))  \leq K \big [ d_0 ( \chi_\alpha(x), \chi_\alpha(y))  + d_0 ( F \circ \chi_\alpha(x),   F \circ \chi_\alpha(y))  \big ] \, . \]
                Now, by Proposition~\ref{prop:Fshift},  $F \circ \chi_\alpha =  \chi_\alpha \circ \sigma$ and $\chi_\alpha$ is Lipschitz. Then 
                \[
                d_0(\pi_\alpha(x, \ell), \pi_\alpha(y, \ell))  \leq 2K^2 \big [  d ( x,y)  + d ( \sigma(x),  \sigma(y))  \big ] \, .
                \] 
                To conclude, we use the fact that since $s(x,y) \leq 1 +  s(\sigma(x),\sigma(y)) $,
                $d ( \sigma(x),  \sigma(y)) \leq \gamma^{-1} d ( x,y)$. 
            \end{itemize}
    \end{itemize}
\end{rmk}

\subsubsection{Proof of Theorem~\ref{thm:NUH} for \texorpdfstring{$T_\alpha$}{T alpha}}
\label{sec:222}

We prove Theorem~\ref{thm:NUH} for $T_\alpha$ instead of $T$,
and Remark~\ref{rmk:Talpha} guarantees that the result carries over to $T$.
We construct a number of intermediate spaces.

Let $\cA$ denote the set of all finite words in the alphabet $\alpha$,
not including the empty word.
For $w = a_0 \cdots a_{n-1} \in \cA$ let 
$h_\cA(w) = \tau(a_0) + \cdots + \tau(a_{n-1})$.

Define $\pi_\cA \colon \cA^\Z \to \alpha^\Z$ by
$\pi_\cA(\ldots, w_{-1}, w_0, w_1, \ldots ) = \cdots w_{-1} w_0 w_1 \cdots$,
which is a concatenation with the first letter of $w_0$ at index $0$.
Similarly define $\bpi_\cA \colon \cA^\N \to \alpha^{\N}$ by
$\bpi_\cA(w_0, w_1, \ldots ) = w_0 w_1 \cdots$.

Let $\pi_\alpha^+ \colon \alpha^\Z \to \alpha^\N$ be the projection on nonnegative coordinates.
Using that $\tau_\alpha$ depends only on the ``future'' coordinates,
define $\btau_\alpha \colon \alpha^{\N} \to \N_0$ so that
$\tau(x) = \btau_\alpha(\pi_\alpha^+(x))$.

Let $\bM_\alpha = \{ (x, \ell) \in \alpha^{\N} \times \Z : 0 \leq \ell < \btau_\alpha(x) \}$, and let
\[
    \bT_\alpha \colon (x, \ell) \mapsto
    \begin{cases}
        (x, \ell + 1)   & \ell < \btau_\alpha(y) - 1 , \\
        (\sigma(x), 0)     & \ell = \btau_\alpha(y) - 1 . 
    \end{cases}
\]
Then $\bT_\alpha \colon \bM_\alpha \to \bM_\alpha$ is a nonuniformly \emph{expanding} system.
It preserves the probability measure $\bm_\alpha = \bmu_\alpha \times \text{counting} / \text{normalization}$,
where $\bmu_\alpha = (\pi_\alpha^+)_* \mu_\alpha$ is the projection of $\mu_\alpha$ on nonnegative coordinates.

Let $\psi \colon M_\alpha \to \bM_\alpha$ be the natural projection,
\[
    \psi \colon ( x, \ell ) \mapsto ( \pi_\alpha^+(x), \ell \bigr)
    .
\]
\begin{rmk}
    \label{rmk:ku}
    $T_\alpha$ is the natural extension of $\bT_\alpha$. In particular, $\psi$ is a semiconjugacy, and
    $\bm_\alpha$ is the unique $\bT_\alpha$-invariant probability measure so that $\psi_* m_\alpha = \bm_\alpha$.
\end{rmk}

To use the same notations as in~\cite{K17r}, let $\xi = \gamma$.
By~\cite[Theorem~3.4, Section~4.1]{K17r}, there exists a probability measure $\PP_\cA$ on $\cA$,
such that the \emph{one-sided} Markov shift tower (see Remark~\ref{rmk:NE}) $\bff \colon \bDelta \to \bDelta$,
defined by $\bigl\{ (\cA, \PP_\cA)$, $h_\cA$, $\xi \bigr\}$, is an extension of $\bT_\alpha$
with the measure preserving and Lipschitz semiconjugacy
$\bpi_\alpha \colon \bDelta \to \bM_\alpha$,
\[
    \bpi_\alpha \colon (x, \ell) \mapsto \bT_\alpha^\ell (\bpi_\cA(x),0)
    .
\]
Next let $\bPP$ denote be the $\bff$-invariant probability measure on $\bDelta$.

Now let $f \colon \Delta \to \Delta$ be the \emph{two-sided} (i.e.\ as in Section~\ref{sec:NUH}) Markov shift tower
defined by the same objects $\bigl\{ (\cA, \PP_\cA)$, $h_\cA$, $\xi \bigr\}$,
with the invariant probability measure $\PP$.

Let $\bpi_\Delta \colon \Delta \to \bDelta$ be the natural projection,
\[
    \bpi_\Delta \colon (x, \ell) \mapsto (\bpi^+_\cA(x), \ell)
    ,
\]
where $\bpi^+_\cA :  \cA^\Z \to \alpha^\N$ is defined by  $\bpi^+_\cA  (\ldots, w_{-1}, w_0, w_1, \ldots ) =w_0 w_1 w_2 \cdots$. Observe that $\bpi_\Delta$ is a measure preserving and Lipschitz semiconjugacy between
$f$ and $\bff$.

\begin{rmk}
    $\bpi_\alpha \circ \bpi_\Delta \colon \Delta \to \bM_\alpha$ is a measure preserving semiconjugacy,
    in particular
    \[
        (\bpi_\alpha \circ \bpi_\Delta)_* \PP
        = \bm_\alpha
        .
    \]
\end{rmk}

Define $\pi_\alpha \colon \Delta \to M_\alpha$,
\[
    \pi_\alpha \colon (x, \ell) \mapsto T_\alpha^\ell (\pi_\cA(x), 0)
    .
\]
Observe that $\pi_\alpha$ is a measure preserving semiconjugacy too.

Now, $f $ ($ \Delta \to \Delta$) preserving the probability measure $\PP$,
with the semiconjacy $\pi_\alpha$ to $T_\alpha $ ($ M_\alpha \to M_\alpha$),
is the Markov shift tower we are after. It remains to verify that it has the required properties:
that $\pi_\alpha$ is measure preserving and Lipschitz, and to bound the return time tails. 

\begin{prop}
    $\pi_\alpha$ is measure preserving:
    $(\pi_\alpha)_* \PP = m_\alpha$.
\end{prop}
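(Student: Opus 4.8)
The plan is to use the fact, recorded in Remark~\ref{rmk:ku}, that $T_\alpha \colon M_\alpha \to M_\alpha$ is the natural extension of $\bT_\alpha \colon \bM_\alpha \to \bM_\alpha$. In particular, $m_\alpha$ is the \emph{unique} $T_\alpha$-invariant probability measure on $M_\alpha$ whose pushforward under the factor map $\psi$ is $\bm_\alpha$. Hence it is enough to show that $\nu := (\pi_\alpha)_* \PP$ is $T_\alpha$-invariant and that $\psi_* \nu = \bm_\alpha$; then $\nu = m_\alpha$ follows by uniqueness.

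$T_\alpha$-invariance is immediate: $\pi_\alpha$ is a semiconjugacy (as noted just above the statement), so $T_\alpha \circ \pi_\alpha = \pi_\alpha \circ f$, whence $(T_\alpha)_* \nu = (\pi_\alpha)_* f_* \PP = (\pi_\alpha)_* \PP = \nu$ because $\PP$ is $f$-invariant. For the projection, I would first establish the identity
\[
    \psi \circ \pi_\alpha = \bpi_\alpha \circ \bpi_\Delta \qquad \text{on } \Delta .
\]
Granting this, $\psi_* \nu = (\bpi_\alpha \circ \bpi_\Delta)_* \PP = (\bpi_\alpha)_* (\bpi_\Delta)_* \PP = (\bpi_\alpha)_* \bPP = \bm_\alpha$, using that $\bpi_\Delta$ is measure preserving from $(\Delta,\PP)$ onto $(\bDelta,\bPP)$ and that $\bpi_\alpha$ is measure preserving from $(\bDelta,\bPP)$ onto $(\bM_\alpha,\bm_\alpha)$ (the latter from~\cite{K17r}). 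Together with $T_\alpha$-invariance and the uniqueness statement recalled above, this yields $\nu = m_\alpha$.

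To prove the displayed identity, fix $(x,\ell) \in \Delta$ with $x = (\ldots, w_{-1}, w_0, w_1, \ldots) \in \cA^\Z$. Unwinding the concatenation maps, the nonnegative coordinates of the two-sided concatenation $\pi_\cA(x) \in \alpha^\Z$ are exactly the letters of the one-sided concatenation $\bpi_\cA(\bpi^+_\cA(x))$; that is, $\pi_\alpha^+ \circ \pi_\cA = \bpi_\cA \circ \bpi^+_\cA$. Hence $\psi(\pi_\cA(x),0) = (\pi_\alpha^+(\pi_\cA(x)),0) = (\bpi_\cA(\bpi^+_\cA(x)),0)$. Since $\psi$ is a semiconjugacy from $T_\alpha$ to $\bT_\alpha$ (Remark~\ref{rmk:ku}),
\begin{align*}
    \psi\bigl(\pi_\alpha(x,\ell)\bigr)
    &= \psi\bigl(T_\alpha^\ell(\pi_\cA(x),0)\bigr)
    = \bT_\alpha^\ell\bigl(\psi(\pi_\cA(x),0)\bigr) \\
    &= \bT_\alpha^\ell\bigl(\bpi_\cA(\bpi^+_\cA(x)),0\bigr)
    = \bpi_\alpha\bigl(\bpi^+_\cA(x),\ell\bigr)
    = \bpi_\alpha\bigl(\bpi_\Delta(x,\ell)\bigr),
\end{align*}
which is the claim.

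The argument is essentially a diagram chase, so no single step is hard; the one place that requires care is the bookkeeping of the concatenation and projection maps $\pi_\cA$, $\bpi_\cA$, $\bpi^+_\cA$, $\pi_\alpha^+$, $\psi$, $\bpi_\Delta$ — keeping track of how the $\Z$-indexed word structure on $\Delta$, once flattened to letters of $\alpha$, matches the $\N$-indexed base of $\bM_\alpha$ — together with the (standard) fact that a natural extension carries a unique invariant measure projecting to a prescribed one, which is precisely what we quote from Remark~\ref{rmk:ku}.
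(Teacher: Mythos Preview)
Your proof is correct and follows essentially the same approach as the paper's: both identify $(\pi_\alpha)_*\PP$ with $m_\alpha$ via the uniqueness of the invariant lift under the natural extension (Remark~\ref{rmk:ku}), after observing that $\psi \circ \pi_\alpha = \bpi_\alpha \circ \bpi_\Delta$. You spell out two points that the paper leaves implicit---the explicit verification of the commuting square and the $T_\alpha$-invariance of $(\pi_\alpha)_*\PP$---but the underlying argument is the same diagram chase.
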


\begin{proof}
    We have constructed four dynamical systems:
    $f \colon \Delta \to \Delta$ with invariant measure $\PP$,
    $\bff \colon \bDelta \to \bDelta$ with invariant measure $\bPP$,
    $T_\alpha \colon M_\alpha \to M_\alpha$ with invariant measure $m_\alpha$, and
    $\bT_\alpha \colon \bM_\alpha \to \bM_\alpha$ with invariant measure $\bm_\alpha$.
    They are connected by semiconjugacies $\pi_\alpha \colon \Delta \to M_\alpha$,
    $\bpi_\alpha \colon \bDelta \to \bM_\alpha$, $\bpi_\Delta \colon \Delta \to \bDelta$
    and $\psi \colon M_\alpha \to M_\alpha$, where $\psi$ is the natural projection.

    The left diagram commutes, and we have to justify the dashed arrow in the right diagram:
    \begin{equation}
        \label{eq:ccomm}
        \begin{tikzcd}[row sep=2.5em,column sep=1.5em]
            \Delta \arrow[dd,"\pi_\alpha"'] \arrow[rd,"\bpi_\Delta"] & & &
            & &
            \PP \arrow[dd,mapsto,dashed,"(\pi_\alpha)_*"'] \arrow[rd,mapsto,"(\bpi_\Delta)_*"] & &
            \\
            & \bDelta \arrow[rd,"\bpi_\alpha"] & &
            & & & \bPP \arrow[rd,mapsto,"(\bpi_\alpha)_*"] &
            \\
            M_\alpha \arrow[rr,"\psi"] & & \bM_\alpha & 
            & &
            m_\alpha \arrow[rr,mapsto,"\psi_*"] & & \bm_\alpha
        \end{tikzcd}
    \end{equation}
    By Remark~\ref{rmk:ku}, $m_\alpha$ is the \emph{unique} probability measure
    on $M_\alpha$ so that $\psi_* m_\alpha = \bm_\alpha$.
    On the other hand, $\psi \circ \pi_\alpha = \bpi_\Delta \circ \bpi_\alpha$, and therefore
    $\psi_* \bigl( (\pi_\alpha)_* \PP \bigr) = \bm_\alpha$, so $(\pi_\alpha)_* \PP = m_\alpha$
    as required.
\end{proof}

By the same argument as in~\cite[Proposition~4.18]{K17r}, $\pi_\alpha$ is Lipschitz.

The return time tails on $\Delta$ and $\bDelta$ are equal by construction,
and~\cite[Theorem~3.4]{K17r} proves that on $\bDelta$ they are as required for
Theorem~\ref{thm:NUH} except for the more general than in~\cite{K17r} polynomial case
$\mu_\alpha(\tau_\alpha \geq n) = O(n^{-\beta} L(n))$
with $\beta > 0$ and $L$ a slowly varying function at infinity.
In turn, the relation between the tails in~\cite{K17r} is taken from~\cite[Section 4]{K18e}.
To extend the results as we require, it is sufficient to notice that,
by taking into account the properties of slowly varying functions (see for instance~\cite[Chapter~1]{BGT87}), the following version
of~\cite[Proposition~4.4]{K18e} holds:

\begin{prop}[in notation of~\cite{K18e}]
    \label{prop:pollog}
    Suppose that there exist $C_\tau > 0$, $\beta > 0$, $L$ a slowly varying function at infinity and $\ell_0 \geq 1$, such that
    $m(\tau \geq \ell) \leq C_\tau \ell^{-\beta} L( \ell)$ for all $\ell \geq \ell_0$.
    Then $\PP(t \geq \ell) \leq C \ell^{-\beta}L( \ell)$.
\end{prop}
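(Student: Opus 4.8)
The plan is to reduce Proposition~\ref{prop:pollog} to the already-established \cite[Proposition~4.4]{K18e}, where the same statement is proved under the stronger hypothesis that $m(\tau \geq \ell)$ is bounded by a pure power $C_\tau \ell^{-\beta}$ (no slowly varying factor), and to show that the argument there is robust enough to absorb the extra factor $L(\ell)$. First I would recall the structure of the proof in \cite{K18e}: the return time $t$ on the tower is built from $\tau$ by a combinatorial construction (concatenation of $\alpha$-words into $\cA$-words, cut at suitable stopping times), and the tail bound $\PP(t \geq \ell)$ is obtained from $m(\tau \geq \ell)$ via an estimate of the form $\PP(t \geq \ell) \leq C \sum_{k} w_k \, m(\tau \geq \ell_k)$ for appropriate weights $w_k$ and shifted lengths $\ell_k$, together with some geometric/renewal bookkeeping. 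The point is that this passage only uses monotonicity of $\ell \mapsto m(\tau \geq \ell)$ and the power-law decay in a ``regularly varying'' way: at no step is the pure-power form genuinely needed, only comparisons like $m(\tau \geq c\ell) \asymp m(\tau \geq \ell)$ up to constants, and summability of $\sum_\ell \ell^{-\beta} L(\ell)$ type tails when $\beta > 1$ (the sub-$1$ case being handled separately, again by comparison inequalities).

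The key steps, in order, are: (i) isolate every place in the proof of \cite[Proposition~4.4]{K18e} where the hypothesis $m(\tau\geq\ell)\le C_\tau\ell^{-\beta}$ is invoked, and record it as a list of elementary inequalities about the function $\ell\mapsto m(\tau\geq\ell)$; (ii) verify that each such inequality continues to hold with $\ell^{-\beta}$ replaced by $\ell^{-\beta}L(\ell)$, using the standard properties of slowly varying functions from \cite[Chapter~1]{BGT87} — in particular the uniform convergence theorem (so that $L(c\ell)/L(\ell)\to 1$ uniformly for $c$ in compacts, giving $\ell^{-\beta}L(\ell)$ the needed almost-multiplicativity under rescaling of $\ell$), Potter's bounds (to dominate $L(c\ell)$ by $C_\eta c^{\pm\eta}L(\ell)$ for small $\eta$, which lets any geometric sum over shifted lengths still converge), and Karamata's theorem (for the partial-sum asymptotics $\sum_{\ell\le N}\ell^{-\beta}L(\ell)$ when $\beta\le1$ and $\sum_{\ell\ge N}\ell^{-\beta}L(\ell)$ when $\beta>1$); (iii) reassemble these to conclude $\PP(t\geq\ell)\le C\ell^{-\beta}L(\ell)$, absorbing the $\eta$-loss from Potter's bounds into the constant since it never competes with the exponential gains inherent in the renewal construction.

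The main obstacle I anticipate is step~(ii) in the regime $\beta \le 1$, where the tails are not summable and the construction in \cite{K18e} must rely on truncated sums $\sum_{\ell \le N}$ and on the precise way the cut lengths scale with $N$; here one must check that replacing $\ell^{-\beta}$ by $\ell^{-\beta}L(\ell)$ does not change the \emph{order} of the truncated sum (it does not, by Karamata: $\sum_{\ell\le N}\ell^{-\beta}L(\ell)\sim (1-\beta)^{-1}N^{1-\beta}L(N)$ for $\beta<1$, and a $\log$ correction for $\beta=1$ that is itself slowly varying and can be absorbed into a redefined $L$), and that the self-consistency estimate that closes the induction in \cite{K18e} still closes with the slowly varying factor present. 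Since $L$ is fixed throughout and all comparisons are up to multiplicative constants, this is a matter of bookkeeping rather than a new idea, which is why we merely state the conclusion and refer to \cite{BGT87,K18e}; a fully detailed verification would simply reproduce the proof of \cite[Proposition~4.4]{K18e} verbatim with $\ell^{-\beta}$ carrying the extra factor $L(\ell)$ at every occurrence.
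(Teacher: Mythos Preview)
Your proposal is correct and matches the paper's approach exactly: the paper does not give a separate proof of this proposition but simply states, in the sentence preceding it, that one obtains the result by rerunning the proof of \cite[Proposition~4.4]{K18e} while ``taking into account the properties of slowly varying functions (see for instance~\cite[Chapter~1]{BGT87})''. Your write-up is in fact more detailed than the paper's, spelling out which properties from \cite{BGT87} (uniform convergence, Potter's bounds, Karamata) are the relevant ones.
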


The proof of Theorem~\ref{thm:NUH} for $T_\alpha$ is complete, and following Remark~\ref{rmk:Talpha} we recover the full result.

\section{ASIP for H\"older observables of nonuniformly hyperbolic maps}
\label{sec:ASIPNUH}

Theorem~\ref{thm:examples} is an application of the following general result, which is the goal of this section:

\begin{thm}
    \label{thm:ASIPNUH}
    Let $T \colon M \to M$ be an ergodic, measure-preserving transformation defined on a bounded metric
space $(M, d)$ with Borel probability measure $m$. Let  $\varphi \colon M \to \R$ be H\"older continuous and centered, i.e.\ $\int \varphi \, dm = 0$.
    Let $S_n( \varphi)  = \sum_{k =0}^{ n-1} \varphi \circ T^k$. Suppose that $T$ is nonuniformly hyperbolic (in the sense of Section~\ref{sec:NUHd}) with uniformly hyperbolic induced map $F$, return time $\tau$ and  $F$-invariant measure 
    $\mu$ associated with a subset $Y$ of $M$.  Assume that $\int \tau^2 \, d \mu < \infty$. Then  the limit 
    $c^2=   \lim_{n \rightarrow \infty}  n^{-1}\int |S_n( \varphi)|^2 d \mu$ exists. In addition, 
    \begin{enumerate}[label=(\alph*)]
        \item\label{thm:ASIP:flower:NUH}
            If $ \mu ( \tau \geq n) = O(n^{-\beta} (\log n)^\gamma)$, with $\beta > 2$ and $\gamma \in {\mathbb R}$,   
            then for each $\eps > 0$ the process $S_n$ satisfies the ASIP with variance $c^2$ and rate
            $o(n^{1/\beta} (\log n)^{( \gamma + 1)/\beta+ \eps})$.
        \item If $\int \tau^\beta \, d \mu < \infty$ with $\beta > 2$,
            then $S_n$ satisfies the ASIP with variance $c^2$ and rate
            $o(n^{1/\beta})$.
        \item If $\int e^{\beta \tau^\delta} \, d \mu  < \infty$ with $\beta > 0$ and $\delta \in (0,1]$,
            then $S_n$ satisfies the ASIP with variance $c^2$ and rate
            $O(\log n)^{1 + 1/\delta}$.
    \end{enumerate}
\end{thm}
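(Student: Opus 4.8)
The plan is to reduce the ASIP for $T\colon M\to M$ to the corresponding statement on the two-sided Markov shift tower supplied by Theorem~\ref{thm:NUH}, and then to prove the ASIP on that tower by the Berkes--Liu--Wu method adapted to a Markov chain whose observable depends on the full two-sided trajectory. First I would apply Theorem~\ref{thm:NUH} to obtain the Markov shift tower $f\colon\Delta\to\Delta$ with invariant measure $\PP$ and the Lipschitz, measure-preserving semiconjugacy $\pi\colon\Delta\to M$. Since $\int\tau^2\,d\mu<\infty$ implies (second bullet of Theorem~\ref{thm:NUH}) $\int h_\cA^2\,d\PP_\cA<\infty$, and in each of cases (a)--(c) the tail/moment hypothesis on $\tau$ transfers to $h_\cA$ with the same rate, it suffices to prove the ASIP for $\tilde\varphi=\varphi\circ\pi$ on $(\Delta,\PP,f)$. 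The function $\tilde\varphi$ is Lipschitz on $\Delta$ because $\varphi$ is H\"older and $\pi$ is Lipschitz (after passing to a power of the metric, which is harmless), and Birkhoff sums are preserved: $S_n(\varphi)\circ\pi=S_n(\tilde\varphi)$.

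Next I would code $(\tilde\varphi\circ f^j)_{j\in\Z}$ as a function of a stationary Markov chain as in~\eqref{eq:Sng}. Points of $\Delta$ are pairs $(x,\ell)$ with $x=(a_k)_{k\in\Z}\in\cA^\Z$ and $0\le\ell<h_\cA(a_0)$; the driving chain $(g_n)_{n\in\Z}$ records the symbol being read together with the height within the current tower level, so that $\tilde\varphi\circ f^n$ becomes $\psi(\ldots,g_{n-1},g_n,g_{n+1},\ldots)$ for a function $\psi$ which, thanks to the Lipschitz bound and the metric~\eqref{eq:cdis}, is exponentially well approximated by functions of finitely many coordinates, the approximation error being controlled by the probability that the separation time is large. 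The key quantitative input is the \emph{meeting time} of two independent copies of the chain: its moments are governed by $\int h_\cA^2\,d\PP_\cA<\infty$ (and by the stronger hypotheses in (b), (c)), and they translate into the required decay of the coupling/approximation errors. This is the step where the two-sided nature genuinely matters: unlike in~\cite{CDKM18,CDKM20SD}, $\psi$ depends on the past as well, so one must control approximation by blocks $\psi(g_{n-r},\ldots,g_{n+r})$ symmetrically and handle the boundary terms coming from the past coordinates. I expect this reduction, and in particular establishing the moment bounds for the meeting time and the attendant martingale--coboundary type decomposition with $L^2$ control (using $\int\tau^2\,d\mu<\infty$ to get the existence of $c^2=\lim n^{-1}\int|S_n(\varphi)|^2\,d\mu$), to be the main obstacle.

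With the Markov chain representation and its quantitative mixing in hand, I would run the Berkes--Liu--Wu scheme: partition $\{0,\ldots,n-1\}$ into blocks of slowly growing length, replace each block sum by a version measurable with respect to finitely many coordinates (paying the approximation error), establish a blockwise martingale approximation, apply the Skorokhod embedding to the resulting martingale differences, and sum the embedding errors. The error rate is then a bookkeeping optimisation: with the block length tuned against the tail of $h_\cA$, case (b) yields the polynomial rate $o(n^{1/\beta})$ when $\int\tau^\beta\,d\mu<\infty$, $\beta>2$; case (a), where $\mu(\tau\ge n)=O(n^{-\beta}(\log n)^\gamma)$, produces the extra logarithmic factor $(\log n)^{(\gamma+1)/\beta+\eps}$ from summing the slowly varying tail over the blocks (here I would invoke Proposition~\ref{prop:pollog} and standard Karamata estimates); and case (c), where the return time has a stretched-exponential moment $\int e^{\beta\tau^\delta}\,d\mu<\infty$, gives the near-optimal rate $O((\log n)^{1+1/\delta})$ with block lengths of order $(\log n)^{1/\delta}$. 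Throughout, the degenerate case $c^2=0$ is excluded from this argument and is treated separately; I would note that here and proceed under $c^2>0$.
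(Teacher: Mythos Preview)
Your proposal is correct and follows essentially the same route as the paper: reduce via Theorem~\ref{thm:NUH} to the Markov shift tower, code the process as a function of the two-sided Markov chain $(g_n)$, control finite-range approximations through the meeting time, and then run the Berkes--Liu--Wu scheme as in~\cite{CDKM18,CDKM20SD} with the two-sided modifications (the paper's Propositions~\ref{Prop34} and~\ref{Ine410} and Lemma~\ref{lma33}), treating $c^2=0$ separately. One small inaccuracy: the Berkes--Liu--Wu construction does not use Skorokhod embedding but rather a conditional quantile (Sakhanenko-type) coupling on the blocks; this does not affect the validity of your outline.
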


\begin{rmk}
    The dependence of the ASIP on $\eps$ in item~\ref{thm:ASIP:flower:NUH} should be understood as follows:
    for every $\eps > 0$, there exists a Brownian motion $(W_t)_{t >0}$ with variance $c^2$ such that
    $S_n = W_n + o(n^{1/\beta} (\log n)^{( \gamma + 1)/\beta+ \eps})$ almost surely.
\end{rmk}

For the flower billiard map and the two falling balls map   defined  in~\eqref{eq:MAPS}, it has been proved in  \cite{CZ05a} that they are examples of nonuniformly hyperbolic maps with tails of the return times of the form $O ( n^{-3} ( \log n)^3 )$. On another hand, for the  intermittent Baker's map as described in \eqref{eq:LSV2}, it has been proved in \cite{MV16}  that the associated return times have tails  $\sim n^{-\beta}$ with $\beta =1/\alpha$.  These considerations together with Theorem \ref{thm:ASIPNUH} prove Theorem \ref{thm:examples} of the introduction. 

\begin{rmk}
    \label{rmk:BZ23}
    It is expected~\cite{BZ23} that for the maps~\eqref{eq:MAPS} the tails of the return times can be improved to $O(n^{-3})$,
    dropping the logarithmic factor, similarly to what was done for Buminovich stadia and related systems~\cite{CZ08}.
    This would improve our result.
\end{rmk}

Let $f \colon \Delta \to \Delta$ be a Markov shift tower constructed for $T$ as in Theorem~\ref{thm:NUH},
built with a probability space $(\cA, \PP_\cA)$, roof function $h_\cA$
and the metric constant $\xi \in (0,1)$. Recall the associated notations,
in particular that $(\Delta, d)$ is a metric space and that $f$ preserves the probability measure $\PP$.

By Theorem~\ref{thm:NUH}, the process $( \varphi \circ T^k)_k$ defined on $(M,m)$ and the process
$( v \circ f^k)_k$, where $v = \varphi \circ \pi$,  defined on $(\Delta,\PP)$ have the same law.
Moreover, if $ \varphi$ is H\"older continuous then so is $v$ since $\pi$ is Lipschitz.
Hence Theorem~\ref{thm:ASIPNUH} will follow from its equivalent result stated for Markov shift towers
as given in Proposition~\ref{thm:ASIPProp} below.

\subsection{ASIP for Markov shift towers}
\label{sec:ASIP}

\begin{prop}
    \label{thm:ASIPProp}
    Suppose that $v \colon \Delta \to \R$ is H\"older continuous and centered, i.e.\ $\int v \, d\PP = 0$.
    Let $S_n = \sum_{k < n} v \circ f^k$. Assume that $\int h_\cA^2 \, d\PP_\cA < \infty$. Then $c^2=   \lim_{n \rightarrow \infty} n^{-1}\int |S_n( \varphi)|^2 d \mu$ exists. In addition, 
    \begin{enumerate}[label=(\alph*)]
        \item\label{thm:ASIP:flower}
            If $\PP_\cA (h_\cA \geq n) =  O(n^{-\beta} (\log n)^\gamma)$, with $\beta > 2$ and $\gamma \in {\mathbb R}$, 
            then $S_n$ satisfies the ASIP with variance $c^2$ and rate
            $o(n^{1/\beta} (\log n)^{ (\gamma + 1)/\beta+ \eps})$ for each $\eps > 0$.
        \item If $\int h_\cA^\beta \, d\PP_\cA < \infty$ with $\beta > 2$,
            then $S_n$ satisfies the ASIP with variance $c^2$ and rate
            $o(n^{1/\beta})$.
        \item If $\int e^{\beta h_\cA^\delta} \, d\PP_\cA < \infty$ with $\beta > 0$ and $\delta \in (0,1]$,
            then $S_n$ satisfies the ASIP with  variance $c^2$ and rate
            $O(\log n)^{1 + 1/\delta}$.
    \end{enumerate}
\end{prop}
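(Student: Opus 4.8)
The plan is to reduce the ASIP for Birkhoff sums over the Markov shift tower to an ASIP for a (two-sided) stationary sequence which is a nice function of a renewal-type Markov chain, and then to run an adapted Berkes--Liu--Wu argument. First I would unfold the suspension: a point of $\Delta$ is $(x,\ell)$ with $x \in \cA^\Z$ and $0 \le \ell < h_\cA(x_0)$, and by the standard Kac/suspension bookkeeping one rewrites $S_n(v)$ on $(\Delta,\PP)$, up to a bounded (and almost surely $O(1)$, in fact $o(n^{1/\beta})$ after a maximal-function/Borel--Cantelli estimate using $\int h_\cA^2\,d\PP_\cA<\infty$) error coming from the first and last incomplete towers, as a Birkhoff sum $\sum_{k<N} \Phi \circ f_X^k$ over the \emph{base} shift $f_X \colon \cA^\Z \to \cA^\Z$, where $N = N(n)$ is a random time comparable to $n/\E h_\cA$ and $\Phi(x) = \sum_{0 \le \ell < h_\cA(x_0)} v(x,\ell)$. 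Since $v$ is H\"older on $(\Delta,d)$ and $d$ is the symbolic metric with constant $\xi$, the summand $\Phi$ depends on the full two-sided trajectory $(\ldots,x_{-1},x_0,x_1,\ldots)$ but with summability of the dependence on far-away coordinates controlled by $\xi^{s}$ together with the roof $h_\cA$; this is exactly the representation~\eqref{eq:Sng} with $g_k = x_k$, $\psi = \Phi$.

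Next I would set up the probabilistic machinery on the Markov chain $(g_k)$, which here is i.i.d.\ (the base is a Bernoulli shift on $(\cA,\PP_\cA)$), so the ``meeting time'' coupling is trivial and the coordinates are genuinely independent; the only subtlety is that $\psi$ is an \emph{unbounded} function (through $h_\cA$) of infinitely many coordinates. I would approximate $\psi$ by $\psi_M$, its conditional expectation given coordinates with index in $[-M,M]$, controlling $\|\psi - \psi_M\|_{L^p}$ by splitting into the event $\{h_\cA(x_0) \le M^{\theta}\}$, on which the H\"older modulus and $\xi^{s}$ give exponential-in-$M$ decay, and the tail event $\{h_\cA(x_0) > M^{\theta}\}$, whose probability and contribution are governed by the hypothesis on $\PP_\cA(h_\cA \ge n)$ (polynomial, moment, or stretched-exponential in the three cases). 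This yields the quantitative $L^p$-approximation rates that feed into the blocking argument. Then I would invoke the adapted Berkes--Liu--Wu scheme of Section~\ref{sec:ASIPNUH}: partition $\{0,\ldots,N-1\}$ into big and small blocks of polynomially growing length, replace $\psi$ by a finite-coordinate approximation on each block so that distinct big blocks become independent (since the $g_k$ are i.i.d., big blocks separated by a small block of length $\ge 2M$ are exactly independent once we truncate to range $M$), apply the Skorokhod/Strassen embedding or a Gaussian approximation (Sakhanenko / Zaitsev-type inequality) block by block, and sum the errors. The existence of $c^2 = \lim n^{-1}\int |S_n|^2$, and its identification as the variance of the limiting Brownian motion, follows from the stationarity and the summability of covariances implied by the same $L^2$-approximation bounds (this is where $\int h_\cA^2\,d\PP_\cA < \infty$, i.e.\ $\beta > 2$, is used); the degenerate case $c^2 = 0$ is excluded here and handled separately in Section~\ref{Sectvariance is zero}.

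Finally I would convert the random-time statement (ASIP for $\sum_{k<N(n)} \Phi \circ f_X^k$ with $N(n) \approx n/\E h_\cA$) back to the deterministic-time statement for $S_n(v)$: since $N(n) - n/\E h_\cA$ is itself a Birkhoff sum of the centered observable $h_\cA - \E h_\cA$ over $f_X$, it satisfies its own ASIP/CLT with the same error rate, and composing the two Brownian approximations via a time-change (Koml\'os--Major--Tusn\'ady-style interpolation of Brownian motion, absorbing the fluctuation of the random time into the error term because $W$ has modulus of continuity $O(\sqrt{t\log t})$ and the time fluctuation is $O(\sqrt n)$, which is dominated by $n^{1/\beta+\eps}$ when $\beta < 2$ — wait, here $\beta>2$, so one must instead note that the random-time fluctuation contributes only through the variance rescaling and a lower-order term, and the clean way is to run the blocking argument \emph{directly} at deterministic times by grouping $\Delta$-iterates into excursions, which is the route taken in~\cite{CDKM18,CDKM20SD}). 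The main obstacle, and the genuinely new point relative to~\cite{CDKM18,CDKM20SD,BLW14}, is handling the \emph{two-sided} dependence: the approximation $\psi_M$ must condition on both past and future coordinates, which breaks the adaptedness (filtration) structure that martingale-type arguments in the one-sided case rely on, so the big-block decoupling and the embedding step have to be rephrased in terms of independence of blocks of coordinates rather than conditioning on the past; I expect this to require the careful reworking of both the reduction (Section~\ref{sec:MST}) and the probabilistic estimates (Section~\ref{sec:ASIPNUH}) that the introduction flags, in particular a two-sided version of the coboundary/telescoping bookkeeping and of the maximal inequality controlling the small blocks.
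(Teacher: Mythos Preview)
Your primary route---induce to the base shift $f_X\colon\cA^\Z\to\cA^\Z$, work with the \emph{unbounded} observable $\Phi(x)=\sum_{\ell<h_\cA(x_0)}v(x,\ell)$ over an i.i.d.\ sequence $g_k=x_k\in\cA$, then undo a random time change $N(n)\approx n/\E h_\cA$---is not what the paper does, and the obstacle you yourself flag is real and fatal for the stated rates. Since $N(n)-n/\E h_\cA$ is a Birkhoff sum of $h_\cA-\E h_\cA\in L^2$ and hence fluctuates at order $\sqrt n$, the Brownian increment over a time window of that size is of order $n^{1/4}$ (up to logs), so the random-time conversion cannot produce rates better than $n^{1/4}$ no matter how large $\beta$ is. Parts (b) with $\beta>4$ and all of part (c) would be lost. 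The pivot you make in your last paragraph (``the clean way is to run the blocking argument directly at deterministic times by grouping $\Delta$-iterates into excursions'') is in fact the whole point, and it is not developed in your proposal.

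The paper avoids both the unboundedness and the random time by taking the Markov chain $(g_n)$ on $G=\{(a,\ell):a\in\cA,\ 0\le\ell<h_\cA(a)\}$, i.e.\ recording the current height in the tower; this chain is a genuine renewal chain (deterministic climb then random reset), \emph{not} i.i.d., and the conjugacy $\pi_G\colon\Delta\to\cG\subset G^\Z$ is an isometry, so $\psi=v\circ\pi_G^{-1}$ is \emph{bounded} and H\"older. One then has $X_k=\psi((g_{k+n})_{n\in\Z})$ at deterministic times, with no time change. The coupling input is the meeting time $T$ of two $G$-valued chains driven by common innovations $(\eps_n)$ in $\cA$ from independent starts; its tail is controlled by that of $h_\cA$ (Proposition~\ref{prop:MTB}). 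The two-sided dependence you correctly flag is handled by a two-step approximation: first $X_{m,k}$ (rerandomize innovations after time $k+m$ and average), then $\tX_{m,k}=\E(X_{m,k}\mid\eps_{k-m},\ldots,\eps_{k+m})$; the genuinely new estimate is Proposition~\ref{Ine410}, which bounds $\E|\tX_{m,k}-X_{m,k}|$ by $\|\psi\|_\infty\,\PP(T>m/2)+\E\bigl[\xi^{\min(\theta^-_{[m/2]},\theta^+_m)}\bigr]$, where $\theta^\pm$ count visits to the base $G_0=\{(a,0)\}$ in the past and future respectively. With these substitutes for the one-sided approximation bounds of \cite{CDKM18,CDKM20SD}, the Berkes--Liu--Wu blocking goes through verbatim at deterministic times, and the $c^2=0$ case is treated separately via a coboundary representation as you note.
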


The rest of this section is devoted to the proof of Proposition \ref{thm:ASIPProp}.

\subsection{The associated Markov chain}

It is convenient to represent the dynamics on a Markov shift tower
as a Markov chain in the conventional sense.
For this, let $G = \{(a, \ell) \in \cA \times \Z : 0 \leq \ell < h_\cA(a)\}$
and let $\cG \subset G^\Z$ be the set of admissible symbolic trajectories:
\[
    \cG = \{ g = (g_n)_{n \in \Z} \in G^\Z : \text{ if } g_n = (a, \ell) \text{ with }
        \ell < h(a) - 1 \text{, then } g_{n+1} = (a, \ell + 1)
    \}.
\]
Let now $(g_n)_{n\geq 0}$ be a Markov chain with state space $G$ and transition\
probabilities
\begin{align*}
    \PP (g_{n+1} = (a, \ell) & \mid g_n = (a', \ell'))
    \\
    & =
    \begin{cases}
        1, & \ell = \ell' + 1 \text{ and } \ell' + 1 < h(a) \text{ and } a = a', \\
        \PP_\cA(a) , & \ell = 0 \text{ and } \ell' + 1 = h(a'), \\
        0, & \text{else}.
    \end{cases}
\end{align*}
The Markov chain has a unique invariant probability measure  $\nu$
with respect to which the Markov chain $(g_n)_{n \geq 0}$ is stationary.
By the Kolmogorov existence theorem, there exists a stationary Markov chain
indexed by $\Z$ that we still denote by $(g_n)_{n \in \Z}$ with transition
probabilities given above and which defines a probability measure $\PP_{\cG}$
on the space $\cG$. 

\smallskip

Define $\pi_G \colon \Delta \to \cG$ by $\pi_G(x) = (a_n, \ell_n)_n$
such that $f^n(x) \in ([a_n], \ell_n)$ for all $n$, where $[a_n] \subset \cA^\Z$
is the cylinder $\{ x \in \cA^\Z : x_0 = a_n \}$.
Note that $\pi_G$ is a bijection; moreover, it is 
a measure preserving conjugacy.  It follows that if we define   
\begin{equation} \label{defpsi}
\psi \colon \cG \to \R \text{ by }\psi = v \circ \pi_G^{-1}, 
\end{equation} then setting for all $k \in \Z$,
\begin{equation} \label{defprocessXk}
    X_k = \psi ( (g_{k+n})_{n\in \Z} ) , 
\end{equation} 
the stationary process $(X_k)_{k \in \Z} $ has the same law as $( v \circ f^k)_{k \in \Z} $. To give the regularity properties of $\psi$, we need to equip  
$\cG$ with a suitable metric.

\subsection{Metric on \texorpdfstring{$\cG$}{G}}

Since $\pi_G \colon \Delta \to \cG$ is a bijection and $\Delta$ is a metric space,
it is natural to define a metric on $\cG$ so that $\pi_G$ is an isometry.
%
For $g, g' \in \cG$, let
\[
    \ts^\pm (g, g')
    = \inf \{ \ell \geq 0 : g_{\pm \ell} \neq g'_{\pm \ell} \}
    .
\]
Let $G_0 = \{(a, 0) : a \in \cA \} \subset G$, 
\[
    s^{-} (g, g')
    = \# \big\{ 0 \leq \ell <  \ts^{-} (g, g') : g_{- \ell} \in G_0 \big\}
    \text{ and }  s^{+} (g, g')
    = \# \big\{ 0 < \ell \leq \ts^{+} (g, g') : g_{ \ell} \in G_0 \big\} .
\]
Let
\[
    s (g, g')
    = \min \bigl\{ s^- (g, g'), s^+ (g, g') \bigr\}
    .
\]
Then $s$ is a kind of separation time (cf.~\ the separation
time on $X$ defined in Section~\ref{sec:NUH}). 
Recall that $\xi \in (0,1)$ and define
\[
    d(g, g') = \xi^{s(g, g')}
    .
\]
This is the metric on $\cG$ which agrees with that on $\Delta$ in the sense that
$d(x,y) = d(\pi_G(x), \pi_G(y))$ for all $x,y \in \Delta$.

This allows to infer that if $v$ is H\"older with index $\eta \in ]0,1]$ then so is $\psi$ defined by \eqref{defpsi}. 
\smallskip

Next, to  prove the ASIP  for the partial sums associated with $(X_k)_{k \in \Z}$ defined by \eqref {defprocessXk} with $\psi$ bounded and H\"older continuous, it is convenient, as in \cite{CDKM18, CDKM20SD}, to represent the Markov chain with the help of its innovations and to introduce a particular meeting time.

\subsection{Innovations and meeting time of the  underlying Markov chain} 
\label{sec:inn}

Extending the approach in~\cite[Section~3]{CDKM18},
without changing the distribution of $(g_n)_{n \in \Z}$, we assume that
there is a sequence of iid random elements $(\eps_n)_{n \in \Z}$ sampled from $(\cA, \PP_\cA)$
such that each $\eps_{n+1}$ is independent from $(\eps_\ell, g_\ell)_{\ell \leq n}$,
and $g_{n+1}=U(g_n,\eps_{n+1})$, where
\[
    U((a, \ell), \eps)
    = \begin{cases}
        (a, \ell+1), & \ell < h(a) - 1, \\
        (\eps, 0), & \ell = h(a) - 1.
    \end{cases}
\]
We refer to $(\eps_n)_{n \in \Z}$ as \emph{innovations}.

For $g_0, g'_0 \in G$ and a sequence $(\eps_n)_{n \in \Z}$ of random elements  in $\cA$, let $T_{g_0, g'_0} ((\eps_n))$
be the \emph{meeting time} of two Markov chains
with respective initial states $g_0, g'_0$ and common innovations $(\eps_n)_{n \in \Z}$:
\[
    T_{g_0, g'_0} ((\eps_n)) = \inf \{ \ell \geq 0 : g_\ell = g'_\ell \}
    ,
\]
where $g_{n+1} = U(g_n, \eps_{n+1})$ and $g'_{n+1} = U(g'_n, \eps_{n+1})$.

\begin{prop}
    \label{prop:MTB}
    Suppose that $g'_0$ is an independent copy of $g_0$, also independent from $(g_n, \eps_n)_{n \geq 0}$. 
    Set $T = T_{g_0, g'_0} ((\eps_n))$. 
    \begin{enumerate}
        \item  Assume that there exists a sequence  $\eps(n)$ of positive
    reals tending to $0$ as $n \rightarrow \infty$, such that $\PP_\cA( h_\cA \geq n) \geq \exp (-n \eps(n))$. If in addition $\PP_\cA( h_\cA \geq n)  = O (  L(n) n^{-\beta}) $ with $\beta > 1$
            and $L(n)$ a slowly varying function at infinity, then  $\PP (T \geq n )  = O (  L(n) n^{-(\beta -1)})$.
        \item  If $\PP_\cA( h_\cA \geq n)  = O (  L(n) n^{-\beta}) $ with $\beta > 1$
            and $L(n)$ a slowly varying function at infinity, then  $ \E ( g_\beta(T) )  <\infty$
            for any $\eta >1$ where $g_\beta(x) = x^{\beta-1} (\log (1+x))^{- \eta}/ L(x)$. 
        \item  If $\E_\cA( h^\beta_\cA)  < \infty$ for some $\beta >1$, then $\E ( T^{\beta-1} ) < \infty$.
        \item  If  $\PP_\cA( h_\cA \geq n) = O (  {\rm e}^{ - c n^\delta}  )< \infty$ for some $c >0$
            and $\delta \in (0,1]$ , then  there exists $\kappa >0$ such that 
            $ \PP (T \geq n ) = O (  {\rm e}^{ - \kappa n^\delta}  )$.  
    \end{enumerate}
\end{prop}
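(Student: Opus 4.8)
The plan is to analyze the meeting time $T = T_{g_0,g_0'}((\eps_n))$ by decomposing it according to the tower structure. Note that a state $(a,\ell) \in G$ climbs deterministically $(a,\ell)\mapsto(a,\ell+1)$ until it reaches the top level $h_\cA(a)-1$, at which point it jumps to $(\eps,0)$ using the common innovation. Hence two chains started at $g_0$ and $g_0'$ can only meet at a moment when \emph{both} are at level $0$; once one of them lands at some $(\eps_m,0)$ and the other is also at level $0$ at time $m$, they share the same innovation $\eps_m$ at the \emph{next} top-visit and, crucially, once both sit at a level-$0$ state with the same symbol, they stay coupled forever. So the first step is to reduce the meeting time to a comparison of the two \emph{renewal sequences} of return times to level $0$: let $R_1 = h_\cA(g_{0}), R_2=\ldots$ be the successive holding times of the first chain and $R_1',R_2',\ldots$ those of the second (all iid with law $h_\cA$ under $\PP_\cA$, by stationarity of $g_0$ and independence). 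The two chains meet at the first common renewal epoch of the two (delayed) renewal processes — i.e.\ $T$ is at most the first time $n$ at which $\sum_{i} R_i$ and $\sum_j R_j'$ coincide, starting from the stationary overshoot distribution for $g_0$ (and likewise $g_0'$) given by $\PP_\cA(h_\cA \ge \cdot)/\E_\cA(h_\cA)$. This is a classical coupling-of-renewal-processes setup.

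The second step is to estimate the tail of this common-renewal time. I would invoke (or reprove) the standard renewal-theoretic estimate: if the step distribution has a polynomial tail $\PP(R \ge n) = O(L(n) n^{-\beta})$, then the first coincidence time of two independent stationary-delayed renewal processes with that step law has a tail of order $O(L(n) n^{-(\beta-1)})$ — one loses exactly one power because the stationary overshoot has tail $O(L(n)n^{-(\beta-1)})$ and, after matching the first finite overshoot, the difference of the two renewal walks is a mean-zero random walk with tails controlled by $\beta$, which returns to $0$ in time with the heavier of the two tails. This is precisely the kind of bound proved in~\cite{CDKM18} (or by Lindvall/Thorisson-type arguments, or via the local-limit / deconvolution bounds for renewal sequences); the mild lower bound $\PP_\cA(h_\cA \ge n) \ge \exp(-n\eps(n))$ in item~(1) is the aperiodicity-type hypothesis needed to rule out pathological arithmetic obstructions so that the walks actually do coincide. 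Item~(2) is the integrated version: $\PP(T\ge n) = O(L(n) n^{-(\beta-1)})$ from item~(1) immediately gives $\E(g_\beta(T)) = \E\big( T^{\beta-1}(\log(1+T))^{-\eta}/L(T)\big) < \infty$ for $\eta > 1$, since $\sum_n n^{\beta-2}(\log n)^{-\eta}$ converges; here one does not even need the lower-tail hypothesis because the slowly varying function and the extra logarithm absorb the borderline case, so item~(2) is stated without the $\exp(-n\eps(n))$ assumption. Item~(3) is the moment reformulation of item~(2) with $L\equiv 1$ and no logarithmic correction, using $\E_\cA(h_\cA^\beta) < \infty \iff \sum_n n^{\beta-1}\PP_\cA(h_\cA\ge n) < \infty$ and the analogous criterion for $T$.

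For item~(4), the argument is the same coupling but with a stretched-exponential step tail $\PP_\cA(h_\cA \ge n) = O(e^{-cn^\delta})$: the stationary overshoot then also has a stretched-exponential tail (with the same $\delta$ and a possibly smaller constant), and the first-coincidence time of the two renewal processes inherits a stretched-exponential tail $O(e^{-\kappa n^\delta})$. Concretely, on the event $\{T \ge n\}$ either one of the first $O(n)$ holding times exceeds $\sqrt{\text{(something)}}$ — contributing $O(e^{-\kappa n^\delta})$ by a union bound over $O(n)$ steps each with tail $e^{-c'(\cdot)^\delta}$ — or all holding times are moderate and then the bounded-step random walk difference must avoid $0$ for $\Omega(n)$ steps, which has exponentially small (hence $O(e^{-\kappa n^\delta})$) probability by a standard large-deviation/return-time estimate. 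The main obstacle I anticipate is bookkeeping the \emph{delayed} (stationary-overshoot) initial distributions correctly and making the renewal-coincidence estimate fully rigorous in the slowly-varying polynomial regime — i.e.\ getting the exact exponent $\beta-1$ with the correct slowly-varying factor $L(n)$ rather than $L(n)^{1+o(1)}$ — which is exactly the technical heart already handled in~\cite[Section~3]{CDKM18}, so the cleanest route is to cite and lightly adapt that lemma rather than redo the renewal analysis from scratch.
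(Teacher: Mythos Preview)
Your renewal-coupling picture is correct, and for items~2, 3, 4 your plan matches the paper exactly: those items are simply cited from \cite{CDKM18} and \cite{CDKM20SD}, which prove them via Lindvall's coupling of renewal processes.

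For item~1, however, there are two genuine gaps. First, the pointwise tail bound $\PP(T\ge n)=O(L(n)n^{-(\beta-1)})$ is \emph{not} in \cite{CDKM18}; that paper only gives the moment bounds of items~2 and~3, so ``cite and lightly adapt'' does not suffice here. The paper instead proves a new lemma (Lemma~\ref{lem:MTB1}): $\PP(T\ge n)=O\big(\E_\cA((h_\cA-n)_+)\big)$, obtained by a contradiction argument. One starts from Lindvall's implication $\E_\cA(\psi^{(0)}(h_\cA))<\infty \Rightarrow \E(\psi(T))<\infty$ for $\psi$ in the class $\Lambda_0$ (nondecreasing, $\log\psi(n)/n\searrow 0$), rewrites it as $\sum_k\varphi(k)\E_\cA((h_\cA-k)_+)<\infty \Rightarrow \sum_k\varphi(k)\PP(T\ge k)<\infty$, and then, assuming the conclusion fails along a subsequence $(n_k)$, constructs a specific $\varphi$ supported on $\{n_k\}$ that violates the implication.

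Second, you misread the role of the hypothesis $\PP_\cA(h_\cA\ge n)\ge\exp(-n\eps(n))$. It is not an aperiodicity condition (aperiodicity is handled separately); it is exactly what is needed in the contradiction argument to verify that the constructed test function $\psi$ lies in $\Lambda_0$, i.e.\ that $\log\psi(n_k)/n_k\to 0$. Without it, $\E_\cA((h_\cA-n_k)_+)$ could decay too fast and $\psi$ would leave the admissible class. Relatedly, your derivation of item~2 from item~1 is circular as written: item~1 carries an extra hypothesis that item~2 does not, so you cannot deduce~2 from~1. Item~2 is proved in \cite{CDKM18} directly from the Lindvall moment implication with $\psi=g_\beta$, which requires no lower-tail assumption.
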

Items 2 and 3 are proved in \cite{CDKM18} whereas Item 4 is proved in \cite{CDKM20SD}.
Item 1 of Proposition \ref{prop:MTB} follows straightforwardly from Lemma \ref{lem:MTB1} below.

\begin{lemma}\label{lem:MTB1}
    Suppose that $g'_0$ is an independent copy of $g_0$, also independent from $(g_n, \eps_n)_{n \geq 0}$. 
    Set $T = T_{g_0, g'_0} ((\eps_n))$. Suppose that there exists a sequence  $\eps(n)$ of positive
    reals tending to $0$ as $n \rightarrow \infty$, such that $\PP_\cA( h_\cA \geq n) \geq \exp (-n \eps(n))$. Then
    \[
        \PP( T \geq n) = O \bigl( \E_\cA( (h_\cA -n)_+)  \bigr)
        .
    \]
\end{lemma}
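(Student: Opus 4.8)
The plan is to bound the meeting time from above by a ``coalescence through climbing to the top of the tower'' event. Recall that the state space is $G = \{(a,\ell) : a \in \cA, \ 0 \le \ell < h_\cA(a)\}$ and that two chains $(g_n), (g'_n)$ driven by \emph{common} innovations $(\eps_n)$ coalesce forever as soon as they agree at one time. The key observation is: if at some time $n$ \emph{both} chains simultaneously land on level $0$ (i.e.\ $g_n, g'_n \in G_0$), then $g_n = (\eps_n, 0) = g'_n$ and the chains have met; so $T \le$ (first time both chains are at level $0$). First I would reduce to the situation where $g_0 = (a,0)$ and $g'_0 = (a',0)$ are both at the base: starting from arbitrary independent stationary states, within $h_\cA(a_0)$ resp.\ $h_\cA(a'_0)$ steps each chain reaches level $0$, and the stationary distribution of the residual time to the base of a renewal-type chain is governed by $\E_\cA((h_\cA - n)_+)$, which is exactly the target quantity; so it suffices to handle the base-to-base case and then absorb the initial delay into the same estimate.

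Next, I would analyze coalescence from the base. When $g_0 = (a,0)$ and $g'_0 = (a',0)$, the first chain climbs deterministically and returns to level $0$ at time $h_\cA(a)$, reading off $\eps_{h_\cA(a)}$; the second returns at time $h_\cA(a')$. The two chains meet at the first time $n$ which is simultaneously a ``return time'' for both, i.e.\ writing $R_0 = 0 < R_1 < R_2 < \cdots$ for the successive return times of the first chain to $G_0$ (so $R_{k+1} - R_k$ are iid with law $h_\cA$ after the first step, since the value $\eps_{R_k}$ determines the next climb height) and similarly $R'_0 = 0 < R'_1 < \cdots$ for the second, coalescence happens at $\inf\{ R_i = R'_j\}$. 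However — and this is the point where the hypothesis $\PP_\cA(h_\cA \ge n) \ge e^{-n\eps(n)}$ enters — the two renewal sequences are not independent: they share innovations, and more importantly once $R_i = R'_j$ for \emph{some} $i,j$ we are done. The lower bound on the tail of $h_\cA$ is a ``spread-out''/non-lattice-type condition ensuring that the renewal sequence $\{R_i\}$ is not too sparse: it forbids $h_\cA$ from being supported on a slowly growing sparse set, so that two independent renewal sequences with inter-arrival law $h_\cA$ intersect with good probability.

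The cleanest route, which I would pursue, is: run the first chain until it hits level $0$, landing on some state $(\eps,0)$; now condition on the second chain's position $(a', \ell')$ at that moment and on the remaining height $h_\cA(a') - \ell' =: j$ it must still climb. The two chains will coalesce at the next common base-time; a sufficient event for meeting at the very next opportunity is that the first chain's next climb height equals $j$, i.e.\ $h_\cA(\eps) = j$, which (using the common innovation and the tail lower bound $\PP_\cA(h_\cA = j) \gtrsim \PP_\cA(h_\cA \ge j+1) - \PP_\cA(h_\cA \ge j)$, controlled below by the $e^{-j\eps(j)}$ bound after a dyadic/averaging argument) happens with probability bounded below in terms of the distribution of $j$. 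Since the distribution of the residual height $j$ of the stationary second chain is proportional to $\PP_\cA(h_\cA > j)$ and $\sum_{j \ge n} \PP_\cA(h_\cA > j) = \E_\cA((h_\cA - n)_+)$, a renewal/geometric-trials argument gives $\PP(T \ge n) = O(\E_\cA((h_\cA - n)_+))$.

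The main obstacle I anticipate is making the coalescence-probability lower bound uniform: after each unsuccessful attempt the residual height $j$ of the lagging chain is resampled and may drift to large values, where $\PP_\cA(h_\cA = j)$ is tiny; one must show that the \emph{tail} lower bound $\PP_\cA(h_\cA \ge n) \ge e^{-n\eps(n)}$ with $\eps(n) \to 0$ is exactly strong enough to beat this, via a careful accounting that at each ``round'' either the chains meet or the bad residual height itself has the right tail, feeding back into an induction on $\E_\cA((h_\cA - \cdot)_+)$. I would organize this as a single stopping-time argument: let $\sigma$ be the first time both chains are at $G_0$ simultaneously, decompose $\{\sigma \ge n\}$ according to the last base-visit of each chain before time $n$, and bound using the Markov property plus the above coalescence estimate, arriving at $\PP(\sigma \ge n) = O(\E_\cA((h_\cA-n)_+))$ and hence $\PP(T \ge n) = O(\E_\cA((h_\cA-n)_+))$ as claimed.
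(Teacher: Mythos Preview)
Your plan takes a completely different route from the paper's proof, and the route has a genuine gap at its core.

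You attempt a direct coupling analysis: identify $T$ with the first simultaneous return of two coupled delayed renewal processes to $G_0$, then run a geometric-trials/stopping-time argument to extract the pointwise bound $\PP(T\ge n)=O(\E_\cA((h_\cA-n)_+))$. The identification is correct, but the last step --- turning the renewal structure into the \emph{pointwise} tail bound --- is exactly the hard part, and you say as much (``The main obstacle I anticipate is making the coalescence-probability lower bound uniform\dots''). Your sketch of how to close it (lower-bound $\PP_\cA(h_\cA=j)$ from the tail hypothesis, then iterate) does not work as written: the hypothesis $\PP_\cA(h_\cA\ge n)\ge e^{-n\eps(n)}$ gives no useable lower bound on point masses $\PP_\cA(h_\cA=j)$, and a geometric-trials argument with per-round success probability depending on a random residual height will not produce the clean bound $O(\E_\cA((h_\cA-n)_+))$ without substantially more machinery.

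The paper's proof is indirect and avoids all of this. It uses as a black box Lindvall's renewal-coupling result (in the form stated in Rio's book): for every $\psi$ in the class $\Lambda_0$ of nondecreasing $\psi\colon\N\to[1,\infty)$ with $\log\psi(n)/n\downarrow 0$, one has
\[
  \sum_{k\ge 1}\varphi(k)\,\E_\cA\bigl((h_\cA-k)_+\bigr)<\infty
  \ \Longrightarrow\
  \sum_{k\ge 1}\varphi(k)\,\PP(T\ge k)<\infty,
  \qquad \varphi(k)=\psi(k)-\psi(k-1).
\]
The pointwise bound is then extracted by a duality/contradiction argument: if $\PP(T\ge n)\le C\,\E_\cA((h_\cA-n)_+)$ failed, pick a subsequence $(n_k)$ along which $\PP(T\ge n_k)\ge k\,\E_\cA((h_\cA-n_k)_+)$ and set $\varphi(n_k)=k^{-3/2}/\E_\cA((h_\cA-n_k)_+)$, $\varphi\equiv 0$ elsewhere. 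The left sum converges, the right diverges, contradiction --- \emph{provided} the resulting $\psi$ lies in $\Lambda_0$. This is precisely, and only, where the hypothesis $\PP_\cA(h_\cA\ge n)\ge e^{-n\eps(n)}$ is used: it forces $\E_\cA((h_\cA-n)_+)\ge e^{-n\eps'(n)}$, hence $\log\psi(n_k)/n_k\to 0$. So your reading of the hypothesis as a ``spread-out/non-lattice'' condition controlling coalescence probabilities is off; its role is purely to keep the test function $\psi$ in the admissible class for Lindvall's theorem.

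In short: your renewal picture is correct, but the leap from it to the pointwise tail estimate is the whole lemma, and your plan does not bridge it. The missing idea is the moment-to-pointwise duality trick above.
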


\begin{proof}
    As in Lindvall~\cite{Lind} (see also Rio~\cite[Proposition 9.6]{Rio17}),
    let $\Lambda_0$ be the class of nondecreasing functions $\psi$ from ${\mathbb N}$ to $[1, \infty[$
    such that $\log (\psi (n))/n$ decreases to $0$ as $n \rightarrow \infty$. Let
    $$
        \psi^{(0)}(k) = \sum_{i=0}^{k-1} \psi(i)
        \quad \text{and, for $k \geq 1$,} \quad
        \varphi(k) = \psi(k)-\psi(k-1)
        .
    $$
    Proceeding exactly as in the proof of~\cite[Lemma~3.1]{CDKM18} and applying the result of~\cite{Lind}, we infer that 
    $$
        \E_\cA \bigl ( \psi^{(0)}(h_\cA) \bigr) < \infty
        \Rightarrow \E(\psi(T)) < \infty
        .
    $$
    This last assertion is in fact equivalent to 
    \begin{equation}\label{eq:implies}
        \sum_{k=1}^\infty \varphi(k) \E_\cA \bigl( (h_\cA -k)_+ \bigr) < \infty
        \Rightarrow
        \sum_{k=1}^\infty \varphi(k) \PP( T \geq k ) < \infty
        .
    \end{equation}
    Now, assume that the conclusion of Lemma~\ref{lem:MTB1} is not true.
    Then there exists an increasing sequence $(n_k)_{k \geq 1}$ such that $\PP( T \geq n_k) \geq  k \E_\cA( (h_\cA -n_k)_+)$.
    Define then the function $\varphi$ as follows: $\varphi(i)=0$ if $i\notin \{n_k, k \geq 1\}$
    and $\varphi(n_k)= k^{-3/2}/\E_\cA( (h_\cA -n_k)_+) $.
    For such a $\varphi$ it is clear that the sum on left hand in~\eqref{eq:implies} is finite,
    while the selection of $n_k$ implies that the sum on right hand is $+\infty$.
    This leads to a contradiction and proves the Lemma, provided we check that the function $\psi$ defined by 
    $\psi(k) =\psi(0) + \varphi(1)+ \cdots + \varphi(k)$ belongs to $\Lambda_0$.
    Hence, it remains to check that $\log(\psi(n_k))/n_k$ tends to $0$ as 
    $k\rightarrow \infty$. Now, by definition of $\varphi$,
    $\psi(n_k)-\psi(0) \leq C\sqrt k/\E_\cA( (h_\cA -n_k)_+)$. On another hand, the assumption on $h_\cA$ implies that ${\mathbb E}_\cA( (h_\cA -n_k)_+) \geq \exp (-n_k \eps'(n_k))$ for some sequence $\eps'(n)$ of positive reals tending to $0$ as $n \rightarrow \infty$. This implies that $\log(\psi(n_k))/n_k$ tends to 0 as 
    $k\rightarrow \infty$, and the proof is complete.
\end{proof}

\subsection{Proof of Proposition \ref{thm:ASIPProp} when \texorpdfstring{$c^2 > 0$}{variance is non-zero}}
\label{sec:non-trivial}

By the previous considerations, it suffices to prove the ASIP for the partial sums associated with $(X_k)_{k \in \Z}$ defined by \eqref {defprocessXk} with $\psi$ bounded and H\"older continuous, by taking into account Proposition \ref{prop:MTB}.

Without loss of generality, one can assume that $v \colon \Delta \to \R$ is Lipschitz and that $\|v\|_{\Lip} \leq 1$,
where $\|v\|_{\Lip} = \sup_x |v(x)| + \sup_{x \neq y} |v(x) - v(y)| / d(x,y)$.
Recall that $\psi \colon \cG \to \R$, $\psi = v \circ \pi_G^{-1}$.
Then $\psi$ is also Lipschitz, in particular
\[
   | \psi(g) - \psi(g') |
    \leq \xi^{s(g,g')}
   .
\]
%
We assume also in the rest of this section that the Markov chain $(g_n)_{n \in \Z}$ is aperiodic. If it is not the case,
we modify the proof as in  \cite[Appendix~A]{CDKM18}.

The proof follows the lines of \cite[Section~4.2]{CDKM18} or \cite[Section~3]{CDKM20SD} with the same notations, but with the following changes. \cite[Proposition~3.2]{CDKM18} and~\cite[Proposition~2.3]{CDKM20SD} are replaced by Proposition \ref{Prop34} below whereas \cite[Inequality~4.10]{CDKM18} is replaced by our Proposition \ref{Ine410}, whereas \cite[Lemma~3.3]{CDKM18}  is replaced by our Lemma \ref{lma33}.  Moreover in the case where $\PP_\cA (h_\cA \geq n) =  O(n^{-\beta} (\log n)^\gamma)$, with $\beta > 2$ and $\gamma \in {\mathbb R}$, we take $m=m_{\ell} = [3^{\ell/\beta} \ell^{\kappa}]$ with $\kappa > (1+\gamma)/\beta$. In case  $\int h_\cA^\beta \, d\PP_\cA < \infty$ with $\beta > 2$,
    we select $m=m_{\ell} = [3^{\ell/\beta} ]$, and if  $\int e^{\beta h_\cA^\delta} \, d\PP_\cA < \infty$ with $\beta > 0$ and $\delta \in (0,1]$, we take 
  $m=m_{\ell} = [\kappa \ell^{1/\delta} ] $, for a suitable $\kappa$.

\medskip

To state the key propositions   \ref{Prop34} and  \ref{Ine410}, we need to introduce some notations.

Fix $k \in \Z$ and $m > 0$, and let $(\eps'_n)_n$ be a copy of $(\eps_n)_n$
independent of $(g_n, \eps_n)_n$. Define
\[
    \tg_\ell =
    \begin{cases}
     g_\ell,                        & \ell \leq k + m , \\
        U( g_{\ell - 1}, \eps'_\ell) ,&    \ell =  k + m    +1   ,   \\
        U( \tg_{\ell - 1}, \eps'_\ell), & \ell \geq  k + m +2
    \end{cases}
    \qquad \text{and} \qquad
    X_{m,k} = \E_g \psi( (\tg_{n+k})_{n \in \Z} )
    ,
\]
where $\E_g$ is the conditional expectation given $(g_n, \eps_n)$. 

Let 
\begin{equation} \label{defTmeet}
T = T_{g_0, g'_0}((\eps_n)_{n\geq 1})
\end{equation}
be  the meeting time between $(g_n)_{n \geq 0}$ and $(\tg_{n})_{n \geq 0}$ with the same innovation $(\eps_n)_{n \geq 1}$ but starting from two independent 
starting points $g_0$ and $g_0'$.  
\begin{prop}  \label{Prop34} $ $ 
\begin{enumerate} 
   \item Assume that $\E (T) < \infty$. Then, for every $r \geq 1$,
    \[
        \E | X_{m,k} - X_k |
        \ll m^{-r/2} + \PP( T \geq \lfloor m / r \rfloor)
        .
    \]
    \item Assume that there exist $\delta >0$ and $\gamma \in ]0,1]$ such that $\PP( T \geq n) = O ( {\rm e}^{- \delta n^{\gamma}} )$. Then there exists 
    $\delta' >0$ such that 
    \[
      \E | X_{m,k} - X_k | = O ( ( {\rm e}^{- \delta' m^{\gamma}} ) .
    \]

    \end{enumerate}
\end{prop}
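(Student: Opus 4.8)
The plan is to estimate $\E|X_{m,k} - X_k|$ by splitting on the meeting time $T$ of the two coupled chains. By stationarity we may take $k = 0$. Recall that $X_0 = \psi((g_n)_{n\in\Z})$ while $X_{m,0} = \E_g \psi((\tg_n)_{n\in\Z})$, where $\tg_n = g_n$ for $n \le m$ and $\tg_n$ evolves from a fresh innovation $(\eps'_n)$ thereafter. Since $\psi$ is Lipschitz with $|\psi(g) - \psi(g')| \le \xi^{s(g,g')}$, and $s$ is the minimum of $s^+$ (counting base-visits $g_\ell \in G_0$ for $0 < \ell \le \ts^+$) and $s^-$ (counting base-visits for the past), the first step is to bound $\xi^{s((g_n),(\tg_n))}$ from above by controlling \emph{how many base-visits} the common future trajectory $g_0, g_1, \dots$ (equivalently $\tg_0, \dots, \tg_m$) accumulates before time $m$, together with using that $(g_n)_{n \le 0}$ is shared exactly so $s^- = \infty$; hence $s = s^+$, and $s^+$ is at least the number of indices $\ell \in (0, m]$ with $g_\ell \in G_0$, call it $N_m$, provided the two future trajectories agree up to time $m$ — which they do by construction, $\tg_\ell = g_\ell$ for $\ell \le m$. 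Thus $|X_{m,0} - X_0| \le \E_g[\xi^{N_m \wedge (\text{future base-visits of }\tg)}]$, but more simply $|X_{m,0}-X_0| \le \E_g[\xi^{N_m}] \wedge 2\|v\|_\infty \le \xi^{N_m} \wedge 2$, and then $\E|X_{m,0}-X_0| \le \E[\xi^{N_m}] + 2\PP(\text{bad coupling event})$. Actually the cleaner route, matching \cite[Prop.~3.2]{CDKM18}, is: the separation of $(g_n)$ and $(\tg_n)$ is governed both by how far in the future they agree (they agree at least until time $m$, so $N_m$ base-visits are guaranteed) \emph{and} by the fact that after time $m$ the chain $\tg$ has not yet re-coupled to $g$; once it does recouple — at the meeting time $T$ counted from the state at time $m$ — they agree forever after, contributing infinitely many further base-visits. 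So on the event $\{T \le \lfloor m/r\rfloor\}$ one has many base-visits in $(0,m]$ unless the chain sits at high levels of the tower, an event controlled by $m^{-r/2}$-type bounds via $\int h_\cA^2 < \infty$ (here is where the exponent $r/2$ and the standing moment assumption enter, exactly as in \cite{CDKM18}); on the complement $\{T > \lfloor m/r\rfloor\}$ one just bounds the difference by $2\|v\|_\infty$ and pays $\PP(T \ge \lfloor m/r\rfloor)$.

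Concretely, for item~(1): write
\[
    \E|X_{m,0} - X_0|
    \le \E\bigl[ \xi^{N_m} \wedge 2 \bigr] + 2\,\PP\bigl(T \ge \lfloor m/r\rfloor\bigr),
\]
where $N_m$ is the number of base-visits of $(g_\ell)$ in the range where agreement between the two trajectories is guaranteed \emph{together with} the recoupling tail. The first term is handled by the standard observation that a trajectory of length $m$ in the Markov shift tower fails to accumulate $\asymp m/r$ base-visits only if it spends a disproportionate time at high levels, whose probability is $O(m^{-r/2})$ because $\E_\cA h_\cA^2 < \infty$ (this is precisely \cite[Lemma~3.3]{CDKM18} / its replacement Lemma~\ref{lma33} in this paper, applied to get $\E[\xi^{N_m}] \ll m^{-r/2}$ after choosing constants); and crucially once $T \le \lfloor m/r\rfloor$, the two trajectories coincide from time $\le m - (m - \lfloor m/r \rfloor)$ onward — no, more carefully, they coincide on $(-\infty, m]$ always, and \emph{also} on $[m + T', \infty)$ where $T'$ is the post-$m$ meeting time, so on $\{T' \le \lfloor m/r\rfloor\}$ the separation time $s$ counts base-visits over $(-\infty, m]\cup[m+\lfloor m/r\rfloor, \infty)$, forcing $s = \infty$ and hence $\psi$-values equal. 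That is the key structural point: recoupling before $\lfloor m/r\rfloor$ makes the difference \emph{exactly zero}, not just small. So in fact
\[
    |X_{m,0} - X_0| \le 2\|v\|_\infty \,\mathbf{1}\{T \ge \lfloor m/r\rfloor\} + \xi^{N_m}\,\mathbf{1}\{T < \lfloor m/r\rfloor\},
\]
wait — this needs the coupled $\tg$ chain, whose post-$m$ evolution uses $(\eps'_n)$, to meet the $g$-chain, whose post-$m$ evolution uses $(\eps_n)$; these are \emph{independent}, so $T' = T_{g_m, \tg_{m+1}}$ is the meeting time of two independent-innovation chains — but the stated $T$ in \eqref{defTmeet} is $T_{g_0,g_0'}((\eps_n)_{n\ge1})$ with \emph{shared} innovations and independent starts, and Proposition~\ref{prop:MTB} bounds exactly that. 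Reconciling these — showing the relevant post-$m$ recoupling time has the same distributional tail as the shared-innovation $T$ of Proposition~\ref{prop:MTB} — is the step requiring care, and is handled as in \cite[Section~3]{CDKM18} by noting one may re-realize the coupling so that from time $m+1$ onward both chains are driven by $(\eps'_n)$ (the law of $(g_n)_{n > m}$ is unchanged under this re-realization since innovations are i.i.d.), reducing $T'$ to a shared-innovation meeting time from two starting points whose joint law dominates (or equals, up to the stationary coupling) that in Proposition~\ref{prop:MTB}.

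Optimizing over $r$: take $r = \log m$, so $m^{-r/2} = m^{-(\log m)/2} = o(m^{-A})$ for every $A$, and the bound reads $\E|X_{m,0}-X_0| \ll m^{-r/2} + \PP(T \ge \lfloor m/r\rfloor)$ for every fixed $r \ge 1$, which is the stated conclusion of item~(1). For item~(2), when $\PP(T \ge n) = O(e^{-\delta n^\gamma})$, choose $r = r(m)$ growing slowly (e.g.\ $r = \lceil \delta' m^\gamma \rceil^{1/2}$-type, or simply keep $r$ a large enough constant) so that $\PP(T \ge \lfloor m/r\rfloor) = O(e^{-\delta (m/r)^\gamma}) = O(e^{-\delta'' m^\gamma})$ while $m^{-r/2}$ is super-polynomially small, hence also $O(e^{-\delta' m^\gamma})$ for a suitable $\delta' > 0$; combining gives $\E|X_{m,0}-X_0| = O(e^{-\delta' m^\gamma})$. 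The main obstacle, as flagged, is the bookkeeping identifying the post-time-$m$ recoupling event with the meeting-time $T$ of Proposition~\ref{prop:MTB} and justifying that recoupling before the cutoff forces the $\psi$-difference to vanish — this rests on the two-sided structure of $\cG$ and the definition of $s$ as $\min(s^+, s^-)$, and is the place where the present argument genuinely departs from the one-sided treatment in \cite{CDKM18,CDKM20SD}; everything else is a direct transcription of those proofs together with Lemma~\ref{lma33} (tail control via $\E_\cA h_\cA^2 < \infty$).
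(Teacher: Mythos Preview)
The paper's proof is a bare citation to \cite[Proposition~3.2]{CDKM18} and \cite[Proposition~2.3]{CDKM20SD}: since $(g_n)$ and $(\tg_n)$ coincide for all $n \le k+m$, one has $s^- = \infty$ and the estimate reduces to the one-sided, future-only situation of those papers, whose arguments apply verbatim. You correctly identify this reduction, but what follows contains genuine errors. Your ``recoupling'' mechanism is spurious: after time $k+m$ the chains $g$ and $\tg$ are driven by \emph{independent} innovation sequences $(\eps_n)$ and $(\eps'_n)$, so no shared-innovation meeting time between them is available, and even a chance coincidence of states does not persist past the next base visit. Moreover $\ts^+$ is by definition the \emph{first} index of disagreement, hence finite; later agreement cannot make $s = \infty$, and your claim that on the good event the $\psi$-values are exactly equal is false. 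The attempted fix --- re-realizing $(g_n)_{n>m}$ with $(\eps'_n)$ --- collapses the problem: since $g_m = \tg_m$, it forces $g_n = \tg_n$ for all $n$ and hence $X_{m,k} = X_k$ identically. What is actually needed, after the correct pointwise reduction $\E|X_{m,k}-X_k| \le \E[\xi^{\theta^+_m}]$, is the bound $\E[\xi^{\theta^+_m}] \ll m^{-r/2} + \PP(T \ge \lfloor m/r\rfloor)$; this is the substance of the cited proofs and you have not reproduced it (Lemma~\ref{lma33}, which you invoke, concerns covariances and is not the relevant input).

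Separately, item~(2) does not follow from item~(1) by choosing $r = r(m)$. Forcing $m^{-r/2} = O(e^{-\delta' m^\gamma})$ requires $r \gg m^\gamma / \log m$, whence $m/r \ll m^{1-\gamma}\log m$ and $\PP(T \ge \lfloor m/r\rfloor) = O\bigl(\exp(-c\,(m/r)^\gamma)\bigr)$ is only of order $\exp\bigl(-c\,m^{\gamma(1-\gamma)}(\log m)^\gamma\bigr)$, strictly worse than $e^{-\delta' m^\gamma}$ for every $\gamma \in (0,1]$ (for $\gamma=1$ it is merely polynomial in $m$). A separate argument, that of \cite{CDKM20SD}, is required.
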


\begin{proof}
    See~\cite[Proposition~3.2]{CDKM18} and~\cite[Proposition~2.3]{CDKM20SD}. 
\end{proof}

Now define
\begin{equation}
    \label{eq:tXmk}
    \tX_{m,k} = \E ( X_{m,k} \mid \eps_{k-m}, \ldots, \eps_{k+m} )
    .
\end{equation}

\begin{prop}
    \label{Ine410}
    Let $\theta^{-}_u = \sum_{k=0}^{u-1} {\bf 1}_{\{ g_{-k} \in G_0 \}}$
    and $\theta^{+}_u = \sum_{k=0}^{u-1} {\bf 1}_{\{ g_{k} \in G_0 \}}$.  Then
    \[
        \E | \tX_{m,k} - X_{m,k} |
        \ll   \| \psi \|_\infty \PP (T > m/2)
        + \E \xi^{  \min (\theta^{-}_{[m/2]}, \theta^{+}_{m}  ) } , 
    \]
    where $T$  is defined by \eqref{defTmeet}. Consequently:
    \begin{enumerate} 
   \item If $\E (T) < \infty$, then, for every $r \geq 1$,
    \[
        \E | \tX_{m,k} - X_k |
        \ll m^{-r/2} + \PP( T \geq \lfloor m / r \rfloor)
        .
    \]
    \item If there exist $\delta >0$ and $\gamma \in ]0,1]$ such that $\PP( T \geq n) = O ( {\rm e}^{- \delta n^{\gamma}} )$, then there exists 
    $\delta' >0$ such that 
    \[
      \E | \tX_{m,k} - X_k | = O ( {\rm e}^{- \delta' m^{\gamma}} ) .
    \]

    \end{enumerate}
\end{prop}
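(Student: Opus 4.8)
The plan is to prove the two-sided truncation estimate in Proposition~\ref{Ine410} by comparing $\tX_{m,k}$ to $X_{m,k}$ through an intermediate conditioning argument and exploiting the product structure of the innovations. First I would recall that $X_{m,k} = \E_g \psi((\tg_{n+k})_{n \in \Z})$ depends, through $\tg$, on the whole innovation sequence $(\eps_n)_n$ only through $(\eps_n)_{n \le k+m}$ in the ``future-past'' direction and on a resampled copy $(\eps'_n)_{n > k+m}$ beyond time $k+m$; passing to $\tX_{m,k} = \E(X_{m,k} \mid \eps_{k-m}, \ldots, \eps_{k+m})$ then kills the dependence on innovations with index below $k-m$. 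The difference $\tX_{m,k} - X_{m,k}$ therefore measures two distinct losses of information: (i) the influence of the resampled future innovations $\eps'$ on the value $\psi((\tg_{n+k})_n)$, which is controlled by the meeting time $T$ between the chain driven by the true innovations and the chain that has branched off at time $k+m$, exactly as in the proof of Proposition~\ref{Prop34}; and (ii) the influence of the distant past innovations $(\eps_n)_{n < k-m}$ on $\psi$, which is controlled by the Lipschitz modulus of $\psi$ together with the number $\theta^{-}_{[m/2]}$ of times the backward chain visits the base set $G_0$ within the window, since each such visit resets one coordinate of the underlying Bernoulli factor.

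Concretely, I would write $\tX_{m,k} - X_{m,k} = \E(X_{m,k} \mid \eps_{k-m},\dots,\eps_{k+m}) - X_{m,k}$ and insert an intermediate quantity in which $\psi$ is replaced by its conditional expectation given sufficiently many coordinates of the trajectory $(\tg_{n+k})_n$ around index $0$. On the event $\{T \le m/2\}$ the forward chain has coalesced with its branched copy well before the resampling takes effect on the relevant coordinates, so the contribution of the resampling is negligible; off that event we pay the crude bound $2\|\psi\|_\infty$, giving the term $\|\psi\|_\infty \PP(T > m/2)$. For the past direction, using $|\psi(g) - \psi(g')| \le \xi^{s(g,g')}$ and the definition $s = \min(s^-,s^+)$ with $s^-$ counting visits of the backward trajectory to $G_0$, conditioning on the innovations in the window $[k-m,k+m]$ and averaging over the rest leaves an error of order $\E \xi^{\min(\theta^{-}_{[m/2]}, \theta^{+}_{m})}$: the separation time between two trajectories agreeing on those innovations is at least the number of $G_0$-visits in the backward window (length roughly $m/2$ after accounting for the roof heights) and in the forward window (length $m$). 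Combining the two pieces yields the displayed bound.

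For the two consequences, I would first invoke Proposition~\ref{Prop34} to replace $X_k$ by $X_{m,k}$ at cost $m^{-r/2} + \PP(T \ge \lfloor m/r\rfloor)$ (resp.\ $O(e^{-\delta' m^\gamma})$), so it remains to absorb the new term $\E \xi^{\min(\theta^-_{[m/2]}, \theta^+_m)}$ into the stated rates. Here I would argue that $\theta^{\pm}_u$, the number of returns to the base in $u$ steps, grows linearly in $u$ with overwhelming probability: since $\int h_\cA \, d\PP_\cA < \infty$, the return times to $G_0$ have finite mean, so by a large-deviation or Markov-type estimate $\PP(\theta^{\pm}_u \le c u) $ decays fast enough (polynomially or exponentially, matching the tail regime) that $\E \xi^{\theta^{\pm}_u}$ is summable against whatever the driving rate is — in particular it is $O(m^{-r})$ for every $r$ in the polynomial case and $O(e^{-c' m})$ in the stretched-exponential case, hence dominated by the meeting-time terms. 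Then the bound $\min(\theta^-_{[m/2]}, \theta^+_m) \ge \theta^-_{[m/2]}$ and independence of the two windows finishes the argument.

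The main obstacle I anticipate is item (i): carefully justifying that the resampling of future innovations $\eps'$ only affects the value of $\psi((\tg_{n+k})_n)$ through the meeting time $T$, when $\psi$ depends on the \emph{entire} two-sided trajectory. In the one-sided setting of \cite{CDKM18} this is Proposition~3.2 and is comparatively clean; here one must check that after coalescence the forward halves of the two trajectories agree while the backward halves were never touched by the resampling, so the separation time $s$ between $(\tg_{n+k})_n$ computed with true versus resampled innovations is at least the forward $G_0$-count accumulated between coalescence and time $k+m$ — this is precisely where the two-sidedness forces an argument beyond a routine citation, and where the bookkeeping of which coordinates of the Bernoulli factor have been reset must be done with care.
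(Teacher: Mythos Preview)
There is a genuine misconception in your decomposition. The difference $\tX_{m,k}-X_{m,k}$ has nothing to do with the future resampling $\eps'$: since $X_{m,k}=\E_g\psi((\tg_{n+k})_n)$ has already integrated out $\eps'$, it is a deterministic function $h_m\bigl((g_n)_{n\le k+m}\bigr)$ of the trajectory up to time $k+m$ only. Passing to $\tX_{m,k}$ then averages over the past innovations $(\eps_n)_{n<k-m}$ and nothing else. Your item~(i) and your ``main obstacle'' paragraph therefore address a non-existent difficulty, and your proposed intermediate quantity does not isolate the right effect.

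The meeting time does appear in the bound, but for a reason you did not identify. The paper couples $(g_n)$ with a hybrid chain $(\hg'_n)$ that has an \emph{independent past} $(g'_n)_{n<k-m}$ and then runs forward on the \emph{same} innovations $(\eps_\ell)_{\ell\ge k-m}$; one checks that $\tX_{m,k}$ is the conditional expectation (given $(g_\ell)_{\ell<k-m}$ and $\eps_{k-m},\dots,\eps_{k+m}$) of $h_m$ evaluated on this hybrid trajectory. The relevant meeting time is $\hT=T_{g_{k-m-1},\,g'_{k-m-1}}\bigl((\eps_\ell)_{\ell\ge k-m}\bigr)$, which has the same law as $T$ in~\eqref{defTmeet}. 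On $\{\hT\le m/2\}$ the two trajectories agree from time $k-[m/2]$ through $k+m$, so the Lipschitz bound on $\psi$ yields an error at most $\xi^{\min(\theta^-_{[m/2]},\theta^+_m)}$; on $\{\hT>m/2\}$ one pays $2\|\psi\|_\infty$. Your treatment of the two consequences is correct in outline, though the inequality you need at the end is $\xi^{\min(a,b)}\le\xi^a+\xi^b$ (no independence required), not $\min(a,b)\ge a$.
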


\begin{proof}
    Since $X_{m,k}$ is determined by $(g_n)_{n \leq k + m}$, we write
    $X_{m,k} = h_m((g_n)_{n \leq k + m})$ with some function $h_m$.
    Let $(g'_n, \eps'_n)_{n \in \Z}$ be an independent copy of $(g_n, \eps_n)_{n \in \Z}$.
    Note that $g_n' = U (g'_{n-1}, \eps'_n)$.
    Define $( \hg_{\ell}) $ by 
    \[
        \hg'_\ell = 
        \begin{cases}
            g'_\ell & \ell  <   k - m , \\
            U ( g'_{k-m -1}, \eps_{k-m} ) ,       & \ell  =   k - m , \\
            U (\hg'_{\ell-1 }, \eps_{\ell} ) ,    & \ell \leq k - m + 1 .
        \end{cases}
    \]
    Let $\cB_{k,m}$ be the sigma-algebra generated by
    $\bigl( (g_{\ell})_{\ell < k-m} , \eps_{k-m}, \ldots, \eps_{k+m} \bigr)$. 
    By the properties of conditional expectation, we have
    \[
        X_{m,k} = \E \bigl( h_m ((g_n)_{n \leq k + m}) \mid \cB_{k,m} \bigr)
    \]
    and
    \[
        \tX_{m,k} = \E \bigl( h_m  \bigl( (g'_u)_{u < k - m} , (\hg'_u)_{k - m \leq u \leq k+m}  \bigr) \mid \cB_{k,m} \bigr)
        .
    \]
    Set 
    \[
        \hT = T_{g_{k-m-1}, \hg'_{k-m-1} } ( ( \eps_{\ell} )_{\ell \geq k - m}) 
    \]
    This is the meeting time of the chains $(g_n)_{n \geq k - m - 1}$ and $(\hg'_n)_{n \geq k - m-1}$
    with innovations $( \eps_{\ell} )_{\ell \leq k - m}$ and independent starting points $g_{k-m-1}$ and $\hg'_{k-m-1}$. It has the same law as $T$ defined in \eqref{defTmeet}. 
    Note that 
    \begin{align*}
        I
       &  = \bigl| h_m ((g_n)_{n \leq k + m}) - h_m  \bigl((g'_u)_{u \leq k - m-1} , (\hg'_u)_{k - m \leq u \leq k + m}  \bigr) \bigr| \\
       &  \leq 2 \Vert h_m \Vert_{\infty} {\mathbf 1}_{\hT >m/2} 
        + \sum_{\ell =0}^{[m/2]}  {\mathbf 1}_{\hT= \ell}   \Bigl| h_m ((g_n)_{n \leq k + m})  \\
  &  \quad \quad  \quad - h_m  \bigl( (g'_u)_{u \leq k - m-1} , (\hg'_u)_{k - m \leq u < k - m + \ell},
        g_{k - m + \ell}, \dots, g_k, \dots, g_{k+m}  \bigr) \Bigr| 
        .
    \end{align*}
    But, from the definition of the separation distance and using $\|v\|_{\Lip} \leq 1$, for any  $\ell$ in $\{0, \dots, [m/2] \}$, 
    \begin{multline*}
        \bigl| h_m ((g_n)_{n \leq k + m})  
        - h_m  \bigl((g'_u)_{u \leq k - m-1} , (\hg'_u)_{k - m \leq u < k-m + \ell},
        g_{k - m + \ell}, \dots, g_k, \dots, g_{k+m}  \bigr) \bigr|
        \\
        \leq  \max \big (  \xi^{\# \{ k - m + \ell \leq i \leq k : g_i \in G_0 \} }  ,  \xi^{\# \{ k \leq i \leq k +m : g_i \in G_0 \} }  \big ) .
    \end{multline*}
    Taking the expectation, we get 
    \begin{multline*}
        \E |X_{m,k} -  \tX_{m,k} |
        \leq \E ( I  )
        \\
        \leq 2 \E \bigl( \Vert h_m \Vert_{\infty} {\mathbf 1}_{\hT >m/2} \bigr)
        + \E \max \bigl(  \xi^{\# \{ k - [m/2] \leq i \leq k : g_i \in G_0 \} }  ,  \xi^{\# \{ k \leq i \leq k +m : g_i \in G_0 \} }  \bigr)
        .
    \end{multline*}
    The first part of the proposition  follows by stationarity and the fact that $ \Vert  h_m \Vert_{\infty} \leq \|\psi \|_\infty $.  To end the proof, we used the same arguments as in the proofs of \cite[Proposition~3.2]{CDKM18} and~\cite[Proposition~2.3]{CDKM20SD}. 
\end{proof}

\begin{lemma} \label{lma33} $ $ 
\begin{enumerate} 
   \item If $\E (T) < \infty$, for every $\alpha \geq 1$ and every $k \geq 1$, 
\[
 | {\rm Cov} (X_0,X_k) |  \ll   k^{-\alpha/2} + \PP( T \geq \lfloor k / (4 \alpha)  \rfloor) 
 .
\]
\item If there exist $\delta >0$ and $\gamma \in ]0,1]$ such that $\PP( T \geq n) = O ( {\rm e}^{- \delta n^{\gamma}} )$, then for any $k \geq 0$, there exists 
    $\delta' >0$ such that 
    \[
 | {\rm Cov} (X_0,X_k) |  = O ( ( {\rm e}^{- \delta' k^{\gamma}} ) .
    \]
 \end{enumerate}   
\end{lemma}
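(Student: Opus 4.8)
The plan is to bound the covariance by inserting the approximation $\tX_{m,k}$ from Proposition~\ref{Ine410} and exploiting the fact that $X_0$ and $\tX_{m,k}$ become independent once $m$ is smaller than roughly $k/2$. More precisely, fix $k \geq 1$ and set $m = \lfloor k/4 \rfloor$ (for the exponential case one can simply take $m$ proportional to $k$). Write
\[
    |{\rm Cov}(X_0, X_k)|
    \leq |{\rm Cov}(X_0, X_k - \tX_{m,k})|
    + |{\rm Cov}(X_0, \tX_{m,k})|
    .
\]
For the first term, since $\|X_0\|_\infty \leq \|\psi\|_\infty$, we have $|{\rm Cov}(X_0, X_k - \tX_{m,k})| \ll \E|X_k - \tX_{m,k}| = \E|X_{m,0} - \tX_{m,0}|$ by stationarity (shifting the index by $-k$), which is controlled by Proposition~\ref{Ine410}.

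The second term is where the independence structure enters. Recall that $\tX_{m,k} = \E(X_{m,k} \mid \eps_{k-m}, \ldots, \eps_{k+m})$ is a function of the innovations $\eps_{k-m}, \ldots, \eps_{k+m}$ only. On the other hand, $X_0 = \psi((g_n)_{n \in \Z})$ is measurable with respect to the sigma-algebra generated by $(g_0, \eps_n : n \leq 0)$ together with $(\eps_n : n > 0)$; but here is the subtlety: $X_0$ genuinely depends on the whole two-sided trajectory, including $\eps_1, \eps_2, \ldots$, so it is not automatically independent of $\tX_{m,k}$. The fix is to approximate $X_0$ itself: by Proposition~\ref{Ine410} applied with center $0$, $\E|X_0 - \tX_{m,0}| \ll m^{-\alpha/2} + \PP(T \geq \lfloor m/\alpha\rfloor)$ (or the exponential analogue), and $\tX_{m,0}$ is a function of $\eps_{-m}, \ldots, \eps_m$. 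Provided $m < k - m$, i.e.\ $m < k/2$, the index sets $\{-m, \ldots, m\}$ and $\{k-m, \ldots, k+m\}$ are disjoint, so $\tX_{m,0}$ and $\tX_{m,k}$ are independent (the $\eps_n$ being iid), hence ${\rm Cov}(\tX_{m,0}, \tX_{m,k}) = 0$. Then
\[
    |{\rm Cov}(X_0, \tX_{m,k})|
    = |{\rm Cov}(X_0 - \tX_{m,0}, \tX_{m,k})|
    \leq \|\psi\|_\infty \, \E|X_0 - \tX_{m,0}|
    ,
\]
and this is again bounded via Proposition~\ref{Ine410}.

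Putting the pieces together with $m = \lfloor k/4\rfloor$ gives, in the polynomial-moment regime where $\E(T) < \infty$, the bound
\[
    |{\rm Cov}(X_0, X_k)|
    \ll \E|X_0 - \tX_{m,0}| + \E|X_k - \tX_{m,k}|
    \ll m^{-\alpha/2} + \PP(T \geq \lfloor m/\alpha \rfloor)
    \ll k^{-\alpha/2} + \PP(T \geq \lfloor k/(4\alpha)\rfloor)
    ,
\]
which is item~1; adjusting the constant in $m$ and applying part~2 of Proposition~\ref{Ine410} yields item~2 with some $\delta' > 0$ (absorbing constants into the exponent, using that $\PP(T \geq n) = O(e^{-\delta n^\gamma})$ and $k^{-\alpha/2}$ is dominated by an exponential). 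The main obstacle, and the reason this lemma differs from its one-sided counterpart \cite[Lemma~3.3]{CDKM18}, is precisely that $X_0$ is a function of the \emph{full} two-sided trajectory and so is not directly independent of $\tX_{m,k}$ even for $k$ large; one cannot simply condition and use the Markov property forward in time, but must instead approximate $X_0$ by the finitely-supported $\tX_{m,0}$ as well, which is exactly what the two-sided version of Proposition~\ref{Ine410} (our reworking of \cite[Inequality~4.10]{CDKM18}) is designed to deliver. A minor point to check is that the implied constants in Proposition~\ref{Ine410} are uniform in the base index, which holds by stationarity of $(g_n, \eps_n)_{n\in\Z}$.
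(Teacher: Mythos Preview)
Your argument is correct and is essentially the same as the paper's: both approximate $X_0$ and $X_k$ by the finitely-supported $\tX_{m,0}$ and $\tX_{n,k}$, use that these are independent when the innovation windows are disjoint, and control the remainders via Proposition~\ref{Ine410}. The only differences are cosmetic---the paper takes $m=[k/2]$, $n=[k/2]-1$ and peels off $X_0-\tX_{m,0}$ first, whereas you take $m=\lfloor k/4\rfloor$ for both and peel off $X_k-\tX_{m,k}$ first---and there is a harmless typo where you write $\E|X_{m,0}-\tX_{m,0}|$ instead of $\E|X_0-\tX_{m,0}|$.
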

\begin{proof}
For every positive integers $m$ and $n$, we have  
\begin{multline*}
 | {\rm Cov} (X_0,X_k) | \leq \Vert X_k \Vert_{\infty} \Vert X_0 - \tX_{m,0}  \Vert_1+  | {\rm Cov} ( \tX_{m,0} ,X_k) | \\
  \leq \Vert X_k \Vert_{\infty} \Vert X_0 - \tX_{m,0}  \Vert_1 +  \Vert \tX_{m,0} \Vert_{\infty} \Vert X_k - \tX_{n,k}  \Vert_1  + 
   | {\rm Cov} ( \tX_{m,0} ,\tX_{n,k} ) | \, .
\end{multline*}
For every $k \geq 2$, we select $m= [k/2]$ and $n = [k/2] -1$. In this case $\tX_{m,0}$ and $\tX_{n,k}$ are independent, implying that for any $k \geq 2$, 
\[
 | {\rm Cov} (X_0,X_k) | \leq 
\Vert \psi \Vert_{\infty}  \big (  \Vert X_0 - \tX_{ [k/2] ,0}  \Vert_1  +  \Vert X_k - \tX_{ [k/2] -1,k}  \Vert_1 \big )  \, .
\]
 This upper bound combined with Proposition \ref{Ine410} proves the lemma. 
 \end{proof}

\subsection{Proof of  Proposition~\ref{thm:ASIPProp} when \texorpdfstring{$c^2=0$}{variance is zero}}
\label{Sectvariance is zero}

Like in the nonuniformly expanding case~\cite{CDKM18}, our strategy is to represent $X_k$ as a coboundary: $X_k = z_{k-1} - z_k$,
where $z_i$ is a sufficiently nice stationary sequence, and to bound the growth of $z_k$ almost surely.
Our proof follows well established ideas~\cite{B75,DMR24,MV16,S72} but unlike in the nonuniformly expanding case,
we have not found a result that we can conveniently cite.

The representation $X_k = z_{k-1} - z_k$ is provided by the following general result which is essentially due to Gordin \cite{Go73} (see  also 
~\cite[Lemma 7.1]{DMR24} in case of adapted r.v.'s but  there the condition c) is actually unnecessary). 
\begin{lemma}
    \label{Lemmadec}
    Suppose that $(X_k)_{k \in \Z}$ is a strictly stationary sequence of real-valued random variables that are in $L^2$,
    and let $({\cF}_k)_{k \in \Z}$ be a stationary filtration.
    Let $\E_i$ denote the conditional expectation given $\cF_i$ and set $S_n = X_1+ \cdots + X_n$.
    Assume that
    \begin{itemize}
        \item[(a)] $\E_0 (S_n)$ and $ \sum_{k =0}^{n} (  X_{-k} -  \E_{0} ( X_{-k})  )$ converge in $L^1$ as $n \to \infty$,
        \item[(b)] $\displaystyle  \liminf_{n \rightarrow \infty} \frac{\E |S_n| }{\sqrt{n}}=0$.
    \end{itemize}
    Then, almost surely,  $
        X_i = z_{i-1} - z_i $, 
    where $z_i = g_i-h_i$ with $g_i = \sum_{k \geq i  +1} \E_{i} ( X_k)$
    and $h_i = \sum_{k  \leq i } (  X_{k} -  \E_{i} ( X_{k}) )$.
    All $z_i, g_i, h_i$ are in $L^1$.
\end{lemma}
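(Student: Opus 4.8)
The plan is to prove this in the classical Gordin/Maxwell--Woodroofe spirit, splitting $X_i$ into a ``forward'' (coboundary of a reverse martingale, handled via the filtration going to $+\infty$) part and a ``backward'' part, and showing both telescoping series converge in $L^1$. Concretely, I would first define $g_i = \sum_{k \geq i+1} \E_i(X_k)$ and $h_i = \sum_{k \leq i}(X_k - \E_i(X_k))$ and observe that each summand lies in $L^1$ with $\|g_i\|_1 \leq \sum_{k\geq i+1}\|\E_i(X_k)\|_1$, which under hypothesis (a) is finite: indeed $\E_0(S_n) = \sum_{k=1}^n \E_0(X_k)$ converging in $L^1$ gives $\sum_{k\geq 1}\E_0(X_k)$ convergent, and by stationarity of the filtration $\sum_{k\geq i+1}\E_i(X_k)$ converges too. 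Similarly $\sum_{k=0}^n (X_{-k}-\E_0(X_{-k}))$ converging in $L^1$ gives, after a shift, that $h_i = \sum_{k\leq i}(X_k - \E_i(X_k))$ is a well-defined $L^1$ limit. Hence $z_i = g_i - h_i \in L^1$ for every $i$.

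Next I would verify the algebraic telescoping identity. Writing things formally first: $g_{i-1} - g_i = \sum_{k\geq i}\E_{i-1}(X_k) - \sum_{k\geq i+1}\E_i(X_k)$ and $h_{i-1}-h_i = \sum_{k\leq i-1}(X_k-\E_{i-1}(X_k)) - \sum_{k\leq i}(X_k - \E_i(X_k))$. Adding a telescoping trick $\sum_{k\geq i}\E_{i-1}(X_k) = \sum_{k\geq i}\E_{i-1}\E_i(X_k)$ (since $\cF_{i-1}\subset\cF_i$) and reorganizing, one finds $g_{i-1}-g_i = \E_{i-1}(X_i) + \sum_{k\geq i+1}(\E_{i-1}-\E_i)(X_k)$, while $h_{i-1}-h_i = -(X_i - \E_i(X_i)) + \sum_{k\leq i-1}(\E_i - \E_{i-1})(X_k)$, and subtracting, the ``cross'' sums $\sum_{k\geq i+1}(\E_{i-1}-\E_i)(X_k)$ and $\sum_{k\leq i-1}(\E_i-\E_{i-1})(X_k)$ recombine into $\sum_{k\in\Z}(\E_{i-1}-\E_i)(X_k)$, which should collapse. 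The honest way to organize this is to show the partial sums $g_{i-1}^{(N)} - g_i^{(N)}$ (truncated at $k \leq i+N$) and $h_{i-1}^{(N)} - h_i^{(N)}$ telescope exactly to $X_i + R_N$ with $R_N \to 0$ in $L^1$ by hypothesis (a); then pass to the limit. I would carry this out with finite truncations throughout to avoid manipulating non-absolutely-convergent series.

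Finally, the role of hypothesis (b): the decomposition $X_i = z_{i-1}-z_i$ together with $g_i,h_i \in L^1$ does \emph{not} by itself pin down $z_i$ uniquely — there is a potential ambiguity from an additive invariant $L^1$ random variable (equivalently, $g_i$ as defined is the ``reverse martingale'' primitive and $h_i$ the forward one, but the split between coboundary and genuine martingale part is only fixed once we know $S_n/\sqrt n \to 0$ along a subsequence, ruling out a nondegenerate martingale component). I would use (b) to show that the reverse-martingale increments $d_i := \E_i(X_i) + \ldots$ extracted from $g$ and the martingale increments from $h$ have zero variance: a Maxwell--Woodroofe / Gordin argument shows $\E|S_n|/\sqrt n$ bounded away from $0$ unless the relevant martingale part vanishes, so $\liminf = 0$ forces the clean coboundary representation. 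The main obstacle I expect is precisely this last point — making rigorous that (b) eliminates the martingale part and hence yields the exact identity $X_i = z_{i-1}-z_i$ rather than $X_i = z_{i-1}-z_i + (\text{martingale increment})$ — and the bookkeeping of which $\sigma$-algebra each conditional expectation is taken against when combining the forward and backward pieces; I would model this on \cite[Lemma~7.1]{DMR24} and \cite{Go73}, adapting away the adaptedness assumption as the parenthetical remark indicates.
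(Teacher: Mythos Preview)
Your overall strategy matches the paper's: one arrives at $X_i = d_i + z_{i-1} - z_i$ with $d_i = \sum_{k\in\Z} P_i(X_k)$ (where $P_i = \E_i - \E_{i-1}$) a stationary martingale-difference sequence in $L^1$, and then uses (b) to force $d_i = 0$ a.s. Your handling of the $L^1$ convergence of $g_i,h_i$ and of the telescoping identity is fine and essentially what the paper asserts without details.

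The gap is exactly where you flag it, but your proposed resolution does not work as stated. You want a \emph{lower} bound $\liminf_n n^{-1/2}\E\bigl|\sum_{i\le n} d_i\bigr| > 0$ whenever $d_0 \not\equiv 0$, so that (b) forces $d_0=0$. However, the series defining $d_i$ converges only in $L^1$ under hypothesis~(a), so $d_i \in L^2$ is not known; and the lemma assumes neither ergodicity nor adaptedness. Without these, the martingale CLT and $L^2$-orthogonality are unavailable, and Gordin/Maxwell--Woodroofe type estimates produce \emph{upper} bounds on $\E|M_n|$, not the lower bound you need.

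The paper argues in the opposite direction, following Esseen--Janson~\cite{EJ85}. Burkholder's sharp inequality for the martingale square function~\cite{Bu79} gives, for each $\lambda>0$,
\[
    \PP\Bigl( \frac{1}{n}\sum_{i=1}^n d_i^2 > \lambda^2 \Bigr)
    \leq \frac{C}{\lambda}\,\frac{\E\bigl|\sum_{i=1}^n d_i\bigr|}{\sqrt{n}}
    \leq \frac{C}{\lambda}\,\frac{\E|S_n| + 2\E|z_0|}{\sqrt{n}} .
\]
By (b) the right side tends to $0$ along a subsequence, so $n_k^{-1}\sum_{i\le n_k} d_i^2 \to 0$ in probability; meanwhile the ergodic theorem for nonnegative stationary sequences (valid even without integrability of $d_0^2$) gives $n^{-1}\sum_{i\le n} d_i^2 \to \E(d_0^2\mid\mathcal{I})$ a.s. Comparing the two limits yields $\E(d_0^2\mid\mathcal{I})=0$ a.s., hence $d_0=0$ a.s. This Burkholder step is the missing idea; the paper takes it from~\cite{EJ85}, not from the references you name.
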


\begin{proof}
    Set $d_{i} = \sum_{k \in {\mathbb Z}}  P_i (X_{k+i}) $ where $P_i =  \E_i   - \E_{i-1} $.  Then
    \begin{equation}
        \label{deccobwithN}
        X_i = d_{i }  + z_{i-1} - z_{i}   . 
        .
    \end{equation}
    By assumption, all random variables in the above decomposition are in $L^1$.
    Moreover, $(d_{i})_{i \in \Z}$  is a stationary martingale difference sequence.
    Let's prove that the $d_i$'s are almost surely equal to zero.
    With this aim, we shall proceed as in~\cite{EJ85}. By Burkholder's deviation inequality
    for the martingale square function~\cite{Bu79} (see~\cite[Lemma~6]{EJ85} for an easy reference),
    there exists a positive constant $C$ such that, for any $\lambda >0$,
    \[
        \PP \Big (  n^{-1} \sum_{i=1}^n d_i^2 > \lambda^2 \Big )
        \leq C \lambda^{-1} \frac{ \E  \big | \sum_{i=1}^n d_i \big |}{ \sqrt{n}}
        .
    \]
  But, by \eqref{deccobwithN}, $ \E  \big | \sum_{i=1}^n d_i \big | \leq \E |S_n| + 2 \E |z_0| $. Using part~(b), there exists a subsequence $(n_k)_{k>0}$ tending to infinity such that $n_k^{-1} \sum_{i=1}^{n_k} d_i^2$ converges to zero in probability as $k \rightarrow \infty$. On the other hand, by the ergodic theorem, since  $(d_i^2)_{i \in {\mathbb Z}}$ is a stationary sequence of nonnegative r.v.'s, $n^{-1} \sum_{i=1}^n d_i^2$ converges almost surely to $\E (d_0^2 | {\mathcal I} ) $ where ${\mathcal I}$ is the so-called invariant $\sigma$-field. So, overall, $ \E (d_0^2 | {\mathcal I} ) =0$ almost surely and $\E(d_0^2) =  \E ( \E (d_0^2 | {\mathcal I} ))=0$. This proves that $d_0=0$ almost surely. 
\end{proof}

Now we return to the specific process we are interested in: $X_k = \psi ( (g_{k+\ell})_{\ell \in \Z})$ as in~\eqref{defprocessXk}.
Let $\cF_k = \sigma ( \eps_i, i \leq k)$.

\begin{prop}
    Suppose that $\E(T) < \infty$ and $c^2 = 0$. Then the assumptions of Lemma~\ref{Lemmadec} are satisfied.
\end{prop}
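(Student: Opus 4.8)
The goal is to verify assumptions (a) and (b) of Lemma~\ref{Lemmadec} for the process $X_k = \psi((g_{k+\ell})_{\ell \in \Z})$ with the filtration $\cF_k = \sigma(\eps_i : i \leq k)$. The plan is to obtain both assumptions from the covariance/approximation estimates already available, principally Proposition~\ref{Ine410} and Lemma~\ref{lma33}, together with the moment bound $\E(T) < \infty$.

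\emph{Assumption (a): $L^1$-convergence of $\E_0(S_n)$ and of $\sum_{k=0}^{n}(X_{-k} - \E_0(X_{-k}))$.} The natural route is to compare $X_k$ with the variable $\tX_{m,k}$ defined in~\eqref{eq:tXmk}, which is measurable with respect to $\sigma(\eps_{k-m},\ldots,\eps_{k+m})$. For the ``future'' sum, $\E_0(X_k)$ should be close to $\E_0(\tX_{m,k})$, and for a well-chosen $m = m(k)$ (say $m \sim k/2$) the variable $\tX_{m,k}$ is $\cF_0$-independent of $\cF_{0}$ only in part, but more usefully one compares $\E_0(X_k)$ to $\E(X_k) = 0$ via $\| \E_0(X_k)\|_1 \leq \|X_k - \tX_{m,k}\|_1 + \|\E_0(\tX_{m,k})\|_1$ and controls the second term using that $\tX_{m,k}$ depends only on $\eps_{k-m},\dots,\eps_{k+m}$, so for $m < k/2$ it is independent of $\cF_0$ and $\E_0(\tX_{m,k}) = \E(\tX_{m,k})$, which is $O(\|X_k - \tX_{m,k}\|_1)$. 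By Proposition~\ref{Ine410}(1), with $m = m(k) = \lfloor k/2 \rfloor$ and $r$ chosen large, $\|X_k - \tX_{m,k}\|_1 \ll k^{-r/2} + \PP(T \geq \lfloor k/(2r)\rfloor)$, which is summable in $k$ because $\E(T) < \infty$ gives $\sum_k \PP(T \geq k/(2r)) < \infty$ for fixed $r$ (here one uses $\E(T) < \infty \Rightarrow \PP(T \geq n) = o(1/n)$, and in fact summability of $\PP(T\geq cn)$). Hence $\sum_k \|\E_0(X_k)\|_1 < \infty$, so $\E_0(S_n) = \sum_{k=1}^n \E_0(X_k)$ converges in $L^1$. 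The argument for $\sum_{k=0}^n (X_{-k} - \E_0(X_{-k}))$ is symmetric: each term has $L^1$-norm bounded by $\|X_{-k} - \tX_{m,-k}\|_1 + \|\E_0(\tX_{m,-k}) - \tX_{m,-k}\|_1 \cdot$(again, for $m < k/2$, $\tX_{m,-k}$ is $\cF_0$-measurable so the second piece vanishes) -- wait, here $\tX_{m,-k}$ depends on $\eps_{-k-m},\dots,\eps_{-k+m}$, which lies in $\cF_0$ as soon as $-k+m \leq 0$, i.e.\ $m \leq k$, so indeed $X_{-k} - \E_0(X_{-k})$ has $L^1$-norm $\leq 2\|X_{-k} - \tX_{m,-k}\|_1$ with $m = k$, again summable. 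This establishes (a).

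\emph{Assumption (b): $\liminf_n \E|S_n|/\sqrt n = 0$.} This is where the hypothesis $c^2 = 0$ enters. Since $c^2 = \lim_n n^{-1}\E(S_n^2)$ exists (by the first part of Proposition~\ref{thm:ASIPProp}, whose proof rests on summability of the covariances via Lemma~\ref{lma33} and $\E(T)<\infty$) and equals $0$, we get $\E(S_n^2) = o(n)$, hence $\E|S_n| \leq (\E S_n^2)^{1/2} = o(\sqrt n)$ by Cauchy--Schwarz. So in fact $\lim_n \E|S_n|/\sqrt n = 0$, which is stronger than the stated $\liminf$. I would include a brief remark that the existence and summability needed for $c^2$ to be well-defined is exactly Lemma~\ref{lma33} combined with $\E(T) < \infty$.

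\emph{Main obstacle.} The routine-looking but genuinely delicate point is bookkeeping the measurability: ensuring that for the chosen $m = m(k)$ the variable $\tX_{m,k}$ is genuinely independent of (or measurable with respect to) $\cF_0$, so that the conditional expectation collapses to an ordinary expectation or to the variable itself, and simultaneously that $m(k) \to \infty$ fast enough for Proposition~\ref{Ine410} to give a summable bound. For the future sum this forces $m(k) < k/2$ roughly, which is compatible with $m(k) \to \infty$; for the past sum one can afford $m(k)$ up to $\sim k$. One must also double-check that $\E(T) < \infty$ indeed yields $\sum_k \PP(T \geq \lfloor k/c \rfloor) < \infty$ for any fixed constant $c$ — it does, since $\sum_k \PP(T \geq k/c) \leq c\,\E(T) + O(1)$. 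Beyond this, everything is an application of results already proved in the excerpt.
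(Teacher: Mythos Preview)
Your proposal is correct and follows essentially the same route as the paper: for (a) you compare $X_k$ with $\tX_{m,k}$ (the paper takes $m=\lfloor k/2\rfloor$ in both directions, you take $m=k$ for the past sum, which is immaterial), exploit the measurability/independence of $\tX_{m,k}$ relative to $\cF_0$ to collapse the conditional expectations, and apply Proposition~\ref{Ine410} with $r$ large enough for summability; for (b) you invoke Lemma~\ref{lma33} to get summable covariances, hence $n^{-1}\E(S_n^2)\to c^2=0$, and conclude by Cauchy--Schwarz. One tiny slip: the independence condition for the future sum is $k-m\geq 1$, i.e.\ $m\leq k-1$, not $m<k/2$; your choice $m=\lfloor k/2\rfloor$ satisfies it for $k\geq 1$, so nothing is affected.
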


\begin{proof}
    Recall the construction of $X_{m,k}$ and 
    $\tX_{m,k} = \E ( X_{m,k} | \eps_{k-m}, \ldots, \eps_{k+m})$.
    For  $k \geq 0$, let  $X_{k}^*=\tX_{m_{k} ,k} $  with  $m_k  = \lfloor k/2 \rfloor$.
    Clearly $X_{k}^*$ is independent of $\cF_{0}$ and is centered.
    To prove the first part of assumption~(a), that is the convergence of $\E_0 (S_n)$, write
    \[
        \sum_{k \geq 2} \Vert \E_{0} ( X_k) \Vert_1
        \leq \sum_{k \geq 2} \Vert X_k - X_{k}^*  \Vert_1
        ,
    \]
    and use  Proposition~\ref{Ine410} to prove that the right hand side above converges.

    Next we prove the second part of assumption~(a).  For $k \geq 0$ let $X_{-k}^* = \tX_{\lfloor k/2 \rfloor , - k}$.
    Clearly $X_{-k}^*$ is $\cF_0$-measurable. Hence 
    \[
        \sum_{k  \geq 0} \Vert X_{-k} - \E_0 (X_{-k} )  \Vert_1
        = \sum_{k  \geq 0} \Vert X_{-k} - X_{-k}^* - \E_0 (X_{-k} - X_{-k}^* )  \Vert_1
        \leq 2 \sum_{k \geq 0} \Vert X_{-k} - X_{-k}^* \Vert_1
        . 
    \]
    By another application of Proposition~\ref{Ine410}, for any $r \geq 1$,
    \[
        \sum_{k  \geq 2} \Vert X_{-k} - \E_0 (X_{-k} )  \Vert_1
        \ll \sum_{k \geq 2}  \bigl( k^{-r/2} +  \PP( T \geq \lfloor k /(2 r) \rfloor )   \bigr)
        .
    \]
    Choosing $r >1$, the second part of assumption~(a) follows.

    We turn now to the proof of assumption~(b). By Lemma~\ref{lma33}, since $\E(T) < \infty$,
    $\sum_{k \geq 0} \bigl| {\rm Cov} (X_0, X_k)  \bigr| < \infty$ and then $c^2=\lim_{n \rightarrow \infty} n^{-1} \E(S_n^2)$ is well defined and is assumed to be zero. It follows that assumption~(b) is satisfied. 
    \end{proof}

At this point, as long as $c^2 = 0$, we have the representation
\[
    S_n
    = X_1 + \cdots + X_n
    = z_0 - z_n
\]
with $z_n$ as in Lemma~\ref{Lemmadec}.
Thus the proof of Proposition~\ref{thm:ASIPProp} is reduced to estimating $z_n$ almost surely,
and is completed by:

\begin{prop}
    \label{propcob}
    Suppose $\E(T) < \infty$ and $c^2 = 0$ and $z_i$ are as in Lemma~\ref{Lemmadec}.
    \begin{enumerate}
        \item If $\E (T^{p}) < \infty $ for some $p>1$, then
            $ \Vert z_0 \Vert_p  < \infty$ and $S_n = o (n^{1/(p+1)})$ a.s. 
        \item If $\E (\psi_p(T) )  < \infty $  with
            $\psi_p(x) = x^{p} ( \log x )^{ - ( \gamma + 1 + \eps)}$ for some $\eps >0$,
            then  $S_n = o (n^{1/(p+1)}  ( \log n)^{ ( \gamma + 1 + \eps) /(p+1) } )$ a.s.
        \item If $\PP( T \geq n) = O ( {\rm e}^{- \delta n^{\gamma}} )$ with some $\delta >0$ and $\gamma \in ]0,1]$,
            then $S_n = O (  ( \log n)^{1/ \gamma} )$ a.s.
    \end{enumerate}
\end{prop}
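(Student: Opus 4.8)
The plan is to estimate the almost sure growth of $z_n = g_n - h_n$ from Lemma~\ref{Lemmadec} by controlling moments of $z_0$ (hence of every $z_i$ by stationarity) and then invoking a Borel--Cantelli / maximal-inequality argument. First I would show that under the meeting time hypotheses, $z_0 \in L^p$ (in case~1), or in the appropriate Orlicz class (case~2), or has exponential moments (case~3). The building blocks are already in place: $g_i = \sum_{k \geq i+1} \E_i(X_k)$ and $h_i = \sum_{k \leq i} (X_k - \E_i(X_k))$, and Proposition~\ref{Ine410} gives, for every $r \geq 1$,
\[
    \| \E_0(X_k) \|_1 \leq \| X_k - \tX_{\lfloor k/2 \rfloor, k} \|_1 \ll k^{-r/2} + \PP(T \geq \lfloor k/(2r) \rfloor)
\]
and a symmetric bound for $\| X_{-k} - \E_0(X_{-k}) \|_1$. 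To upgrade from $L^1$ to $L^p$ control one uses that the relevant differences $X_k - X_k^*$ are bounded by $2\|\psi\|_\infty$, so $\|X_k - X_k^*\|_p^p \leq (2\|\psi\|_\infty)^{p-1} \|X_k - X_k^*\|_1$, and then $\|g_0\|_p \leq \sum_{k \geq 1} \|\E_0(X_k)\|_p$ together with $\|h_0\|_p \leq \sum_{k \geq 0} \|X_{-k} - \E_0(X_{-k})\|_p$; each summand is $\ll (k^{-r/2} + \PP(T \geq \lfloor k/(2r)\rfloor))^{1/p}$, and choosing $r$ large enough this series converges precisely when $\E(T^{p}) < \infty$ (resp.\ $\E(\psi_p(T)) < \infty$, resp.\ $\PP(T \geq n) = O(e^{-\delta n^\gamma})$, in which case one even gets exponential moments of $z_0$ by summing $\sum_k e^{-\delta' k^\gamma / p}$-type tails).

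Having $\|z_0\|_p < \infty$, the conclusion $S_n = z_0 - z_n = o(n^{1/(p+1)})$ follows from a standard deterministic-rate lemma for stationary sequences in $L^p$: since $(z_n)$ is strictly stationary with $z_n \in L^p$, $\PP(|z_n| > n^{1/(p+1)} \epsilon) \leq \epsilon^{-p} n^{-p/(p+1)} \|z_0\|_p^p$, which is not summable, so one cannot apply Borel--Cantelli directly along all $n$; instead one restricts to a sparse subsequence $n_j = \lfloor j^{q} \rfloor$ with $q$ chosen so that $\sum_j \PP(|z_{n_j}| > n_j^{1/(p+1)} \epsilon) < \infty$, and interpolates: for $n_j \leq n < n_{j+1}$, $|z_n| \leq \max_{n_j \leq m < n_{j+1}} |z_m|$, and this maximum is controlled by $(n_{j+1} - n_j) \sup |z_m|$-type crude bounds combined with the fact that $z_{m-1} - z_m = X_m$ is bounded. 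Actually the cleanest route is: $|z_n - z_{n_j}| \leq \sum_{m=n_j+1}^{n} |X_m| \leq (n - n_j) \|\psi\|_\infty \leq (n_{j+1} - n_j)\|\psi\|_\infty$, and $(n_{j+1} - n_j) = o(n_j^{1/(p+1)})$ for a suitable choice of the subsequence growth; combined with $z_{n_j} = o(n_j^{1/(p+1)})$ a.s., this yields the rate for all $n$.

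For item~2 the same scheme works with the Orlicz function $\psi_p$: the series $\sum_k \|\E_0(X_k)\|_p$ is replaced by a sum whose convergence is equivalent to $\E(\psi_p(T)) < \infty$ via the standard correspondence between moment conditions on $T$ and summability of tails (as already used in Proposition~\ref{prop:MTB}), and the Borel--Cantelli step picks up the $(\log n)^{(\gamma+1+\epsilon)/(p+1)}$ factor because the subsequence must be chosen slightly denser. For item~3, exponential moments of $z_0$ give $\PP(|z_n| > C \log n) \leq n^{-2}$ for $C$ large, which is summable over all $n$, so Borel--Cantelli applies directly and $z_n = O((\log n)^{1/\gamma})$ (the power $1/\gamma$ entering because the exponential moment is of $|z_0|^\gamma$, not $|z_0|$, when $\PP(T \geq n) \sim e^{-\delta n^\gamma}$); then $S_n = O((\log n)^{1/\gamma})$ as claimed.

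The main obstacle I anticipate is the passage from the $L^1$ estimates furnished by Proposition~\ref{Ine410} to genuine $L^p$ (and Orlicz / exponential) control of the tail sums defining $g_0$ and $h_0$ with summable error: one must choose the free parameter $r$ in Proposition~\ref{Ine410} large enough (depending on $p$) that the polynomial term $k^{-r/2}$ is harmless, verify that the resulting bound $(\PP(T \geq \lfloor k/(2r)\rfloor))^{1/p}$ is summable under the stated moment hypothesis on $T$, and keep careful track of the logarithmic factors in case~2 — this bookkeeping, together with the optimal choice of the Borel--Cantelli subsequence that balances $z_{n_j} = o(n_j^{1/(p+1)})$ against the crude interpolation bound $(n_{j+1}-n_j)\|\psi\|_\infty$, is where the exponent $1/(p+1)$ (rather than something worse) is actually produced and is the delicate point of the argument.
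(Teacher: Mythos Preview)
Your plan has two genuine gaps, both in part~1 (the core case).

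\textbf{Gap in showing $z_0\in L^p$.} Your route is the triangle inequality $\|g_0\|_p\le\sum_{k\ge1}\|\E_0(X_k)\|_p$ together with the interpolation $\|\E_0(X_k)\|_p\le C\,\|X_k-X_k^*\|_1^{1/p}$, and you claim the resulting series $\sum_k\bigl(\PP(T\ge \lfloor k/(2r)\rfloor)\bigr)^{1/p}$ converges ``precisely when $\E(T^p)<\infty$''. That is false: if $\PP(T\ge k)\sim k^{-p}(\log k)^{-2}$ then $\E(T^p)=\sum_k k^{p-1}\PP(T\ge k)<\infty$ but $\sum_k\PP(T\ge k)^{1/p}\sim\sum_k k^{-1}(\log k)^{-2/p}$ diverges for $p>2$. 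The paper avoids the triangle inequality and instead proves the elementary lemma: if $V=\sum_k V_k$ with $0\le V_k\le M$, then $\|V\|_p\le M+\sum_k k^{p-1}\|V_k\|_1$. This needs only $\sum_k k^{p-1}\|X_k-X_k^*\|_1<\infty$, which \emph{is} equivalent to $\E(T^p)<\infty$ via Proposition~\ref{Ine410}.

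\textbf{Gap in the almost sure rate.} Even granting $z_0\in L^p$, your subsequence-plus-interpolation scheme cannot reach $o(n^{1/(p+1)})$. Along $n_j=\lfloor j^q\rfloor$, Borel--Cantelli (via $\PP(|z_{n_j}|>\eps n_j^{1/(p+1)})\le C n_j^{-p/(p+1)}$) requires $q>(p+1)/p$, while the interpolation $(n_{j+1}-n_j)\|\psi\|_\infty=o(n_j^{1/(p+1)})$ requires $q<(p+1)/p$. These are incompatible; the two constraints meet exactly at the target exponent, so you lose at least a logarithm. The information ``$z_0\in L^p$ and increments bounded'' is simply not enough; the paper uses additional structure. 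It proves a maximal inequality
\[
\PP\Bigl(\max_{1\le k\le n}|S_k|>8x\Bigr)\ll \frac{\|S_n\|_p^p}{x^p}+\frac{n}{x}\,\theta\bigl(\lfloor x/M\rfloor\bigr),
\qquad \theta(q)=\|X_0-\tX_{q,0}\|_1,
\]
obtained by writing $S_k=(S_k-\E_k S_k)+\E_k(S_k-S_n)+\E_k S_n$ and inserting a further approximation at lag $q$. The second term, which encodes the decay of $\theta$, is the extra ingredient that produces $1/(p+1)$: with $x\sim n^{1/(p+1)}$ one has $\sum_n n^{-1}\cdot n x^{-1}\theta(x)\sim\sum_m m^{p-1}\theta(m)<\infty$ under $\E(T^p)<\infty$. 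Your outline never invokes $\theta$ at this stage, only $\|z_0\|_p$, and that is why the exponent does not come out.

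Parts~2 and~3 inherit the same problem: in the paper they are obtained from the same maximal inequality (with $\varphi(u)=u^r$, $r<p$, or $\varphi(u)=u^2$), not from moment bounds on $z_0$ alone. In particular your assertion that exponential moments of $z_0$ follow ``by summing $\sum_k e^{-\delta' k^\gamma/p}$-type tails'' is not justified; fortunately the paper only needs $\|z_0\|_2<\infty$ there.
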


\begin{proof}
    Denote $M = \|X_0\|_\infty = \|\psi\|_\infty$.

    We begin with part 1: let $p > 1$ and suppose that $\E (T^p) < \infty$.
    By Proposition~\ref{Ine410},
    \begin{equation}
        \label{conditionforrate1p}
        \sum_{k >0} k^{p-1}  \bigl(
            \Vert X_k -   \tX_{ [k/2] ,k}  \Vert_1
            +  ( \Vert X_{-k} -   \tX_{ [k/2] ,-k}  \Vert_1 
        \bigr)
        < \infty
        , 
    \end{equation}
    First we use~\eqref{conditionforrate1p} to prove that $z_0$ is in $L^p$.
    Recall that $z_0 = g_0 - h_0$ where $g_0 = \sum_{k \geq 1} \E_{0} ( X_k)$
    and $h_0 =  \sum_{k  \leq 0 } (  X_{k} -  \E_{0} ( X_{k})  ) $.
    Observe that for $k \geq 2$, these summands can be bounded in $L^1$ by those in~\eqref{conditionforrate1p}:
    $\Vert \E_0 (X_k) \Vert_1 \leq \Vert X_k - \tX_{ [k/2] ,k} \Vert_1$ and 
    $\Vert X_{-k} -  \E_{0} ( X_{-k})  \Vert_1 \leq 2 \Vert X_{-k} - \tX_{ [k/2] ,-k}  \Vert_1$.
    Now, $g_0$, $h_0$ and consequently $z_0$ are in $L^p$ by an application of 
    Lemma~\ref{simple-lemma} with $V=g_0$ and $V=h_0$ respectively.
    (To prove that $g_0$ is in $L^p$ we can also use the arguments given in the proof of~\cite[Proposition~2.1]{DMR24}.) 

    To prove that $S_n = o (n^{1/(p+1)})$ a.s., it suffices to show that, for every $\eps >0$,  
    $ \sum_{\ell >0}  \PP \bigl( \max_{k \leq  2^{\ell}}  |S_k| > \eps 2^{\ell/(p+1)} \bigr) < \infty$,
    which is equivalent to proving that
    \begin{equation}
        \label{eq:bc}
        \sum_{n>1} n^{-1} \PP \bigl( \max_{k \leq  n}  |S_k| > \eps n^{1/(p+1)} \bigr)
        < \infty
        .
    \end{equation}

    Let $k \leq n$ and write
    \[
        S_k
        = S_k  - \E_k(S_k) + \E_k(S_k-S_n) +  \E_k(S_n)
        .
    \]
    Let $q \geq 1$.
    Using $\E_k( X_i  -  \E_{i-q} (X_i)  ) =0  $ if $i -q \geq k$, we have
    \begin{align*}
        \big | \E_k(S_n-S_k) \big |
        & \leq \Bigl| \sum_{i=k+1}^n \E_k( X_i  -  \E_{i-q} (X_i)  ) \Bigr|
        + \Bigl| \sum_{i=k+1}^n \E_k(   \E_{i-q} (X_i)  ) \Bigr|
        \\
        & \leq 2qM +  \sum_{i=k+1}^n  | \E_k ( \E_{i-q} (X_i) |
        .
    \end{align*}
    Then, using $\E_k(  \E_{\ell + q } (X_{\ell } ) ) =  \E_{\ell + q } (X_{\ell } ) $
    for $\ell \leq k-q$,
    \begin{align*}
        |S_k  - \E_k(S_k)|
        & \leq \Bigl| \sum_{\ell=1}^{k - q} \bigl(
            X_{\ell } - \E_{\ell + q} X_\ell
            - \E_k( X_{\ell } - \E_{\ell + q} X_\ell )
        \bigr) \Bigr|
        + 2 q M
        \\
        & \leq
        \sum_{\ell=1}^{k - q}  \vert X_{\ell } - \E_{\ell + q } (X_{\ell } ) \vert
        +  \sum_{\ell=1}^{k - q}   \E_k \vert X_{\ell } - \E_{\ell + q } (X_{\ell } ) \vert
        + 2 q M
        .
    \end{align*}
    Overall, 
    \begin{multline*}
        \max_{1 \leq k \leq n} |  S_k  |
        \leq 4 q M
        + \max_{1 \leq k \leq n}  |  \E_k ( S_n)   |
        + \max_{1 \leq k \leq n}   \sum_{i=1}^n  | \E_k ( \E_{i-q} (X_i) |
        \\
        + \sum_{\ell=1}^{n}  \vert X_{\ell } - \E_{\ell + q } (X_{\ell } ) \vert
        +  \max_{1 \leq k \leq n}  \sum_{\ell=1}^{n} \E_k \vert X_{\ell } - \E_{\ell + q } (X_{\ell } ) \vert
        .
    \end{multline*}
    Accordingly, if $x > 0$ and $qM \leq x$, then
    \begin{equation}
        \label{decofmax}
        \begin{multlined}
            \PP \bigl( \max_{1 \leq k \leq n} |S_k| > 8x \bigr)
            \leq \PP \bigl( \max_{1 \leq k \leq n}  |  \E_k ( S_n)   |> x \bigr)
            + \PP \Bigl( \max_{1 \leq k \leq n} \sum_{i=1}^n  | \E_k ( \E_{i-q} (X_i) |> x \Bigr)
            \\
            + \PP \Bigl(  \sum_{\ell=1}^{n}  \vert X_{\ell } - \E_{\ell + q } (X_{\ell } ) \vert >x \Bigr)
            +  \PP \Bigl(   \max_{1 \leq k \leq n}  \sum_{\ell=1}^{n} \E_k \vert X_{\ell } - \E_{\ell + q } (X_{\ell } ) \vert>x \Bigr)
            .
        \end{multlined}
    \end{equation}

    Let
    \[
        \theta(q) =   \Vert X_0 -   \tX_{q ,0}  \Vert_1
        .
    \]
    Starting from \eqref{decofmax} and using Doob's maximal inequality, as done in the proof of inequality (A.42) in \cite{MR12},
    we infer  that for any nondecreasing, non negative and convex function $\varphi$ and any $x > 0$,
    \begin{equation}
        \label{inemax}
        \PP \Bigl( \max_{1 \leq k \leq n} |S_k| > 8x \Bigr)
        \ll \frac{\E ( \varphi (S_n) ) }{ \varphi (x) } + n x^{-1} \theta(\lfloor x/M \rfloor)
        .
    \end{equation}
    Then, taking $\varphi(x) =x^p$,
    \begin{align*}
        \sum_{n>1} n^{-1} \PP \bigl( \max_{k \leq  n}  |S_k| > \eps n^{1/(p+1)} \bigr)
        & \ll \sum_{n > 1} n^{-1-\frac{p}{p+1}} \Vert S_n \Vert_p^p
        + \sum_{n > 1} n^{-1/(p+1)} \theta(\lfloor \eps n^{1/(p+1)} / M \rfloor)
        \\
        & \ll \sup_n \Vert S_n \Vert_p^p + \sum_{n >1} n^{p-1} \theta (n)
        .
    \end{align*}
    The expression above is finite: $\|S_n\|_p \leq 2 \|z_0\|_p < \infty$ and
    $\sum_{n >1} n^{p-1} \theta (n) < \infty$ by Proposition~\ref{Ine410}.
    This verifies~\eqref{eq:bc} and completes the proof of part~1.

    We now turn to part~2. By part 1, $\Vert S_n \Vert_r < \infty$ for all $r \in [1, p[$.
    Hence, with $b_n = n^{1/(p+1)} (\log n)^{  ( \eps  +  \gamma +1 )/(p+1) }$,
    applying~\eqref{inemax} with $\varphi(x) = x^r$, it follows that for any $\eta >0$, 
    \[
        \PP ( \max_{1 \leq k \leq n} |S_k| > \eta b_n)
        \ll  b_n^{-r}  + \min \Bigl( 1, \frac{n}{ \eta b_n}   \PP (T >c \eta b_n)  \Bigr)
        ,
    \]
    where $c$ is a positive constant which does not depend on $n$.
    Part~2 is proved if we show that $\sum_{n \geq 1} b_n^{-1} \PP (T > c \eta  b_n)  < \infty$.
    By a change of variable this is equivalent to
    $\sum_{n \geq 2} n^{p-1} (\log n )^{  \eps  + \gamma +1} \PP (T > n) < \infty$
    which holds by Proposition~\ref{Ine410} because $ \E (\psi_p (T)) < \infty$.
    
    We turn now to the proof of part~3. 
    Let $K>0$, to be chosen later.  By~\eqref{inemax} applied with with
    $x= K(\log n)^{1/\gamma}$ and $\varphi(u)=u^{ 2}$, using
    Lemma~\eqref{simple-lemma}, our assumption and part~2 of Proposition~\ref{Ine410}, we see that
    \begin{align*}
        \frac{1}{n} \PP \bigl( \max_{1\le k\le n} |S_k|>8K(\log n)^{1/\gamma} \bigr)
        & \ll 
        \frac{1}{n (\log n)^{2/\gamma}} + \frac1{K(\log n)^{1/\gamma}} {\rm e}^{-\delta' (K(\log n)^{1/\gamma}/M)^\gamma}
        \\ 
        & \ll  \frac1{n (\log n)^2}
        +\frac1{n^{\delta'(K/M)^\gamma}}
        .
    \end{align*}
    We conclude by taking $K =2M\delta'^{-1/\gamma}$.
\end{proof}

\begin{lemma}\label{simple-lemma}
    Let $(V_k)_{k\in \N}$ be a sequence of non negative random variables uniformly bounded by $M$.
    Set $V = \sum_{k\in \N}V_k$. Then, for every $p>1$, we have 
    \begin{equation}
        \|V\|_p
        \le M+ \sum_{k\in \N} k^{p-1}\|V_k\|_1
        .
    \end{equation}
\end{lemma}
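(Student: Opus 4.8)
The plan is to bound $\E[V^p]$ and then take the $p$-th root. First I would dispose of the trivial case: if $\sum_{k\geq1}k^{p-1}\|V_k\|_1=\infty$ the asserted inequality is vacuous, and otherwise, since $p>1$ forces $k^{p-1}\geq1$ for $k\geq1$, we get $\sum_{k\geq0}\|V_k\|_1<\infty$, hence $\E V<\infty$ and $V<\infty$ almost surely. The engine of the proof is the elementary deterministic inequality
\[
    v^p\leq p\sum_{k\geq0}s_k^{p-1}v_k,\qquad s_k:=v_0+\cdots+v_k\ \ (s_{-1}:=0),
\]
valid for all nonnegative reals $(v_k)_{k\geq0}$ and all $p\geq1$: it follows from the convexity bound $s_k^p-s_{k-1}^p\leq p\,s_k^{p-1}(s_k-s_{k-1})$ summed over the telescoping series. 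Applying it with $v_k=V_k(\omega)$ and taking expectations yields $\E[V^p]\leq p\sum_{k\geq0}\E[S_k^{p-1}V_k]$, where $S_k=V_0+\cdots+V_k$.

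Next I would insert the boundedness $0\leq V_j\leq M$, which gives $S_k\leq(k+1)M$, to obtain
\[
    \E[V^p]\leq pM^{p-1}\sum_{k\geq0}(k+1)^{p-1}\,\|V_k\|_1.
\]
The $k=0$ summand is $\E[V_0^p]\leq M^{p-1}\|V_0\|_1\leq M^p$ (handled directly, without the factor $p$), and after the $p$-th root this is responsible for the additive $M$ in the statement; for $k\geq1$ one uses $(k+1)^{p-1}\leq2^{p-1}k^{p-1}$. Taking the $p$-th root, applying subadditivity of $t\mapsto t^{1/p}$, and then Young's inequality in the form $M^{(p-1)/p}a^{1/p}\leq\frac{p-1}{p}\lambda M+\frac{\lambda^{1-p}}{p}\,a$ with $\lambda$ chosen so that the coefficient of $a:=\sum_{k\geq1}k^{p-1}\|V_k\|_1$ equals $1$, one removes the $p$-th root and the factor $M^{(p-1)/p}$ and arrives at a bound of the shape $C_p\,M+\sum_{k\geq1}k^{p-1}\|V_k\|_1$.

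I expect two points to require care. The conceptual one, which is why the lemma is not a one-line consequence of Minkowski, is that the termwise bound $\|V\|_p\leq\sum_k\|V_k\|_p$ is too weak here: the right-hand side of the lemma involves $\|V_k\|_1$, not $\|V_k\|_p$, and there is in general no inequality $\|V_k\|_p\leq k^{p-1}\|V_k\|_1$ (take $V_k=M\mathbf 1_{A_k}$ with $\PP(A_k)\ll k^{-p}$), so one must genuinely exploit the joint structure of the partial sums $S_k$, which is exactly what the first display does. The technical one is pushing the constant $C_p$ down to exactly $1$ as written: the crude computation above leaves $C_p>1$, so to match the statement verbatim one should instead start from the sharper telescoping estimate $\E[V^p]\leq M^{p-1}\sum_{k\geq0}\big((k+1)^p-k^p\big)\|V_k\|_1$ (which carries no spurious factor $p$) combined with a careful treatment of the small-index terms via $\|V_k\|_1\leq M$. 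For the use made of the lemma in this paper — deducing $\|V\|_p<\infty$ from $\sum_k k^{p-1}\|V_k\|_1<\infty$ — any finite constant in place of $1$ would in fact suffice.
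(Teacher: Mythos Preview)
Your direct $p$-th moment approach via the telescoping bound $v^p\le p\sum_k s_k^{p-1}v_k$ is correct and yields $\|V\|_p\le C_pM+\sum_{k\ge1}k^{p-1}\|V_k\|_1$ with a constant $C_p>1$, which, as you rightly observe, suffices for every use made of the lemma in the paper. However, it does not prove the statement as written, and the route you sketch to push $C_p$ down to $1$ (via the refined deterministic bound $S_k^p-S_{k-1}^p\le M^{p-1}((k+1)^p-k^p)V_k$, which is in fact true) is left unverified and, even granted, does not obviously reduce to the exact inequality after taking the $p$-th root.

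The paper's proof proceeds by duality, and this is where the exact constant comes for free. One writes $\|V\|_p=\sup_{\|Z\|_q=1}\E(|Z|V)$ with $q=p/(p-1)$, and for each $k$ splits
\[
\E(|Z|V_k)\le \E\bigl(|Z|V_k\,\mathbf 1_{\{|Z|<k^{p-1}\}}\bigr)+\E\bigl(|Z|V_k\,\mathbf 1_{\{|Z|\ge k^{p-1}\}}\bigr)\le k^{p-1}\|V_k\|_1+M\,\E\bigl(|Z|\mathbf 1_{\{|Z|\ge k^{p-1}\}}\bigr).
\]
Summing the second pieces over $k$ gives $M\,\E\bigl(|Z|\cdot\#\{k:k^{p-1}\le|Z|\}\bigr)\le M\,\E|Z|^q=M$, and the result follows with no $p$-th roots, no Young inequality, and no loss in the constant. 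Compared with your argument, the duality proof is shorter and sharper; your approach is more hands-on and arguably more elementary, and your remark that Minkowski alone cannot deliver the $\|V_k\|_1$ on the right-hand side is a good diagnosis of why some extra idea is needed.
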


\begin{proof}
    Recall that $\|V\|_p=\sup_{\|Z\|_q=1}\E(ZV)$, where $q=p/(p-1)$. Let $Z$ be a random variable with $\|Z\|_q=1$.
    Then 
    \begin{align*}
        \E(|ZV|)
        &   \le \sum_{k\in \N} k^{p-1}\E (|V_k|) + \sum_{k\in \N} M \E \bigl( |Z| {\bf 1}_{\{|Z|\ge k^{p-1}\}} \bigr)
        \\
        &  \qquad =  \sum_{k\in \N} k^{p-1}\E(|V_k|) + M\E\Bigl( |Z|\sum_{1\le k\le |Z|^{1/(p-1)}} 1 \Bigr)
        \\
        &  \qquad = \sum_{k\in \N} k^{p-1}\E(|V_k|) + M\E(|Z^q|)\, ,
    \end{align*} 
    and the lemma is proved.
\end{proof}

\subsection*{Acknowledgements}
The authors are grateful to E.~Rio for useful discussions concerning the proof
of Lemma~\ref{lem:MTB1}.
The authors are also grateful to the three referees for their careful reading and comments,
which improved the presentation of the paper.   

\subsection*{Statements and declarations}
\noindent \textbf{Funding.}
A.K.~has been partially supported by
the European Research Council (ERC) under the European Union's Horizon 2020
research and innovation programme (grant agreement No 787304)
and by EPSRC grant EP/V053493/1.

\noindent \textbf{Conflict of interest.} The authors declare that they have no financial interests.

\noindent \textbf{Data availability.} There is no associated data.

\end{document}